\newtheorem{lemma}{Lemma}[section]
\newtheorem{theorem}[lemma]{Theorem}
\newtheorem{corollary}[lemma]{Corollary}
\newtheorem{prop}[lemma]{Proposition}
\theoremstyle{definition}
\newtheorem{definition}[lemma]{Definition}
\newtheorem{remark}[lemma]{Remark}
\newtheorem{example}[lemma]{Example}
\newtheorem{claim}[lemma]{Claim}
\theoremstyle{remark}
\newcommand\im{\operatorname{Im}}
\newcommand\R{\mathbb{R}}
\newcommand\Z{\mathbb{Z}}
\newcommand\C{\mathbb{C}}
\newcommand\Q{\mathbb{Q}}
\newcommand\N{\mathbb{N}}
\newcommand\m{\mathfrak{m}}
\renewcommand\L{\mathscr{L}}
\newcommand\conv{\operatorname{Conv}}
\newcommand\Cay{\operatorname{Cayley}}
\newcommand\pn[1]{\mathbb{P}^{#1}}
\renewcommand\O{\mathscr{O}}
\newcommand\codim{\operatorname{codim}}
\renewcommand\tilde{\widetilde}
\newcommand\T{\mathbb{T}}
\newcommand\Pic{\operatorname{Pic}}
\title{Local positivity of line bundles on smooth toric varieties and Cayley polytopes}
\author[A.~Lundman]{Anders Lundman}
\address{Anders Lundman\\ Department of Mathematics\\ Royal Institute of
  Technology (KTH)\\ 10044 Stockholm\\ Sweden}
\thanks{The author is supported by the V.R. grant NT:2010-5563}
\begin{document}

\begin{abstract}
For any non-negative integer $k$ the $k$-th osculating dimension at a given point $x$ of a variety $X$ embedded in projective space gives a measure of the local positivity of order $k$ at that point. 
In this paper we show that a smooth toric embedding having maximal $k$-th osculating dimension, but not maximal $(k+1)$-th osculating dimension, at every point is associated to a Cayley polytope of order $k$. This result generalises an earlier characterisation by David Perkinson. In addition we prove that the above assumptions  are equivalent to requiring that the Seshadri constant is exactly $k$ at every point of $X$, generalising a result of Atsushi Ito. 
\end{abstract}

\maketitle{}

\section{Introduction}
For a smooth projective variety $X$ and a line bundle $\L$ on $X$ there are various notions for measuring the local positivity of $\L$ at a point $x\in X$. One way of capturing the local positivity of $\L$ at $x$ is to consider the osculating space $\T_x^k(X,\L)$ of order $k$ for various $k\in \N:=\{0,1,\dots\}$. Recall that $\T_x^k(X,\L)$ is defined as $\pn {}(\im(j_x^k))$ where $\im(j_x^k)$ is the image of the natural map
\[
j_x^k:H^0(X,\L)\to H^0(X,\L\otimes(\O_X/\m_x^{k+1})).
\]
Observe that when $k=1$ the osculating space $\T^1_x(X,\L)$ is simply the projective tangent space at $x$. In this setting we say that $\L$ is $k$-jet spanned if $j_x^k$ is onto. It is natural to ask to what extent fixing the dimension  of the osculating space at every point determines the pair $(X,\L)$. One theorem in this direction is the following characterization of the $k$:th Veronese embedding
\begin{theorem}[\cite{intext}]
Let $N=\left(\begin{array}{c}n+k\\k\end{array}\right)-1$, then a closed embedding of a projective smooth $n$-fold $X\hookrightarrow \pn N$, over any algebraically closed field, is the $k$:th Veronese embedding of $\pn n$ if and only if $\T_x^k(X,\L)\cong \pn N$ for all points $x\in X$.
\end{theorem}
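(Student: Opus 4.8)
The statement is an equivalence, and I would prove the two implications separately, investing essentially all of the work in the forward direction (the osculating hypothesis $\Rightarrow$ Veronese) and disposing of the converse at the end.

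First I would reformulate the hypothesis bundle-theoretically. Since $X$ is smooth of dimension $n$, the length of $\O_X/\m_x^{k+1}$ is $\binom{n+k}{k}=N+1$, so the target of $j_x^k$ has dimension $N+1$, and $\T_x^k(X,\L)\cong\pn N$ holds at $x$ exactly when $j_x^k$ is surjective, i.e. when $\L$ is $k$-jet spanned at $x$. Assembling these maps over $X$ gives the $k$-th jet evaluation $H^0(X,\L)\otimes\O_X\to J^k(\L)$ into the $k$-th jet bundle of $\L$, whose fibre at $x$ is $j_x^k$. The hypothesis says this map is fibrewise surjective; being then a surjection of vector bundles of the same rank $N+1$, it is an isomorphism. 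Hence $J^k(\L)\cong\O_X^{\oplus(N+1)}$ is trivial, $h^0(X,\L)=N+1$, and the embedding is the nondegenerate complete one given by $|\L|$.

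Next I would exploit the triviality of $J^k(\L)$. The jet bundle carries the standard filtration with graded pieces $S^j\Omega_X\otimes\L$ for $0\le j\le k$, so a Chern-class computation gives
\[
\det J^k(\L)\cong\L^{\otimes(N+1)}\otimes K_X^{\otimes\binom{n+k}{n+1}}.
\]
Triviality forces this to equal $\O_X$, and comparing the two binomial coefficients (their ratio is $(n+1)/k$) yields $k(-K_X)=(n+1)\L$ in $\Pic(X)$; in particular $-K_X$ is ample and $X$ is Fano. This is the easy half of the rigidity. The hard part is to pass from this numerical proportionality to the actual identification $X\cong\pn n$: the determinant relation by itself only bounds the Fano index from below, and indeed for the Veronese with $\gcd(k,n+1)>1$ the class $\L$ is divisible, so one cannot read off index $n+1$ from $\det J^k(\L)$ alone.

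To close this gap I would bring in the finer positivity. The $k$-jet spannedness of $\L$ forces the Seshadri constant $\varepsilon(\L;x)\ge k$ at every $x$, hence $\L\cdot C\ge k$ for a minimal rational curve $C$ through a general point; since $X$ is Fano, bend-and-break produces a rational curve with $-K_X\cdot C\le n+1$, and combined with $k(-K_X)=(n+1)\L$ this squeezes $-K_X\cdot C=n+1$, the maximal anticanonical degree. The Cho--Miyaoka--Shepherd-Barron characterisation of projective space then gives $X\cong\pn n$, and tracing $k(-K_X)=(n+1)\L$ back identifies $\L$ with $\O(k)$, so the embedding is the $k$-th Veronese. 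I expect this to be the main obstacle, both because it requires the genuinely stronger rational-curve input (equivalently, the full vanishing of the higher Chern classes of the trivial bundle $J^k(\L)$, not merely its determinant) and because the characterisation of $\pn n$ must be invoked in a form valid over an arbitrary algebraically closed field. For the converse, on $(\pn n,\O(k))$ the degree-$k$ monomials span all $k$-jets at $[1:0:\cdots:0]$, so $\O(k)$ is $k$-jet spanned there, and $\mathrm{PGL}_{n+1}$-homogeneity propagates this to every point, giving $\T_x^k\cong\pn N$ throughout.
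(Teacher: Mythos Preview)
The paper does not prove this theorem at all: it is quoted in the introduction as a known result with the citation \cite{intext}, and no argument is supplied. There is therefore nothing in the paper to compare your proposal against.

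Since you have nonetheless produced a proof sketch, a couple of remarks on it. Your reduction to triviality of $J^k(\L)$ silently assumes $h^0(X,\L)=N+1$: from the embedding $X\hookrightarrow\pn N$ one only gets $h^0(X,\L)\ge N+1$, and fibrewise surjectivity of $H^0(X,\L)\otimes\O_X\to J^k(\L)$ does not by itself force equality of ranks. You need an extra step---for instance, arguing with the $(N+1)$-dimensional linear system $V$ actually giving the embedding, and checking that the osculating hypothesis already forces $V\otimes\O_X\to J^k(\L)$ to be an isomorphism, whence $V=H^0(X,\L)$ a posteriori. Second, the statement is asserted over any algebraically closed field, and you correctly flag that the Cho--Miyaoka--Shepherd-Barron characterisation of $\pn n$ via rational curves is a characteristic-zero result; invoking it in positive characteristic is not routine and would need a genuine substitute (e.g.\ the work of Koll\'ar or Kebekus-type arguments adapted to char $p$, or a separate argument altogether). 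Apart from these two points your outline---trivialise $J^k(\L)$, read off the index relation from $\det J^k(\L)$, then use extremal rational curves to pin down $\pn n$---is a standard and reasonable route to this kind of characterisation.
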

 
Similarly there are characterizations of balanced rational normal surface scrolls \cite{BRNSSII} and abelian varieties \cite{prodSandra} in terms of their osculating spaces. Here we are interested in the case when $(X,\L)$ is a smooth polarized toric variety. As might be expected there are simple combinatorial characterisations of the dimension of $\T_x^k(X,\L)$ in terms of the polytope $P_\L$ associated to $(X,\L)$ (see \cite{Sandra} and \cite{Perkinson}). Moreover, in \cite{Perkinson}, David Perkinson has characterized all polarized smooth toric surfaces and threefolds $(X,\L)$ such that for every point $x\in X$ and for a fixed $k\in \N$, $\L$ is $k$-jet spanned, but not $(k+1)$-jet spanned at $x$. This classification is in terms of the polytope corresponding to $(X,\L)$. If one consider only embeddings given by a complete linear series $|\L|$, then one realize that the associated polytopes, in the classification of Perkinson, are so called Cayley polytopes of type $[P_0*P_1]^k$ (see Definition \ref{caydef}). More explicitly in the case of a surface, $(X,\L)$ is either a Veronese embedding or a $\pn 1$-bundle over $\pn 1$. In the case of threefolds $(X,\L)$ is either a Veronese embedding or a $\pn 1$-bundle over a smooth toric surface. Our main result is a generalization (see Proposition \ref{lowdim}) of this classification to arbitrary dimension. 
\begin{theorem}\label{theTHM}
Let $(X,\L)$ be a smooth polarized toric variety and let $P_\L$ be the polytope associated to the complete linear series $|\L|$. $\L$ is $k$-jet spanned but not $(k+1)$-jet spanned at every point $x\in X$ if and only if $P\cong [P_0*P_1]^k$ for some lower dimensional polytopes $P_0$ and $P_1$ and every edge of $P$ has lattice length at least $k$.
\end{theorem}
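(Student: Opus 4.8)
The plan is to translate both jet-spanning hypotheses into statements about the lattice points of $P$, dispatch one implication by an explicit polynomial, and attack the converse by induction on $\dim P$ with Perkinson's low-dimensional classification (Proposition \ref{lowdim}) as the base case.

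First I would record the combinatorial content of each hypothesis. At a torus-fixed point $x_v$ (a vertex $v$, whose edge generators form a lattice basis by smoothness) one has $\dim\T^d_{x_v}(X,\L)+1=\#\{m\in P\cap M:\deg_v m\le d\}$, with $M$ the character lattice; hence ``$k$-jet spanned at every fixed point'' means $P$ contains the standard $k$-simplex at every vertex, i.e. every edge of $P$ has lattice length at least $k$ (Di Rocco's criterion). Because the rank of $j^d_x$ is lower semicontinuous, the fixed points are the worst points, so ``$k$-jet spanned at every point'' is also equivalent to this edge condition; and for the same reason $\dim\T^{k+1}_x$ attains its maximum at a general point $x_0\in\T$, so ``$\L$ is not $(k+1)$-jet spanned at every point'' is equivalent to ``$\L$ is not $(k+1)$-jet spanned at $x_0$''. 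Writing sections as characters $\chi^m$ and expanding at $x_0$, the osculating space $\T^d_{x_0}$ is the projectivisation of the image of the evaluation map $\C[t_1,\dots,t_n]_{\le d}\to\C^{P\cap M}$, $f\mapsto(f(m))_m$, so that $\dim\T^d_{x_0}+1=\binom{n+d}{d}-\dim\{f:\deg f\le d,\ f|_{P\cap M}=0\}$. Thus the two hypotheses together say exactly: every edge of $P$ has lattice length $\ge k$, and there is a nonzero $f$ of degree $k+1$ (but none of degree $\le k$) vanishing on all of $P\cap M$.

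For the \emph{easy} implication, suppose $P\cong[P_0*P_1]^k$ and let $u$ be the primitive functional of the fibering direction, normalised so $\langle u,\cdot\rangle$ takes the values $0,1,\dots,k$ on $P$. Then $f=\prod_{j=0}^{k}(\langle u,\cdot\rangle-j)$ has degree $k+1$ and vanishes at every lattice point of $P$, so $\L$ is not $(k+1)$-jet spanned at $x_0$, hence nowhere; together with the assumed edge condition (which gives $k$-jet spanning everywhere) the combinatorial criterion is met.

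For the \emph{hard} implication I would induct on $n=\dim P$. The engine is the behaviour of the minimal vanishing polynomial $f$ along edges: on an edge of lattice length $\ge k+1$ the restriction of $f$ is univariate of degree $\le k+1$ with $\ge k+2$ roots, hence $f$ vanishes identically on that edge-line, whereas on an edge of length exactly $k$ it vanishes at precisely the $k+1$ lattice points and its degree-$(k+1)$ part is nonzero there. Running this dichotomy at a facet $F$: either $f$ vanishes on $\operatorname{aff}(F)$, whence its defining form divides $f$ and, since no degree-$\le k$ form kills $P\cap M$, the cofactor is a minimal vanishing polynomial on the lattice points of the peeled facet and one recurses; or $f|_{\operatorname{aff}(F)}$ is itself minimal of degree $k+1$, so by the inductive hypothesis $F\cong[Q_0*Q_1]^k$ and its fibering direction must be propagated across $P$. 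The main obstacle is exactly this globalisation: one must show the local edge- and facet-directions assemble into a single primitive $u$ with $\langle u,\cdot\rangle(P)=[a,a+k]$, upgrading ``$P\cap M$ lies on the hypersurface $V(f)$'' to ``$P\cap M$ lies on $k+1$ parallel lattice hyperplanes'' (ruling out irreducible or non-parallel configurations), and then show the long-edge hypothesis forbids any vertex of $P$ strictly between $\{u=a\}$ and $\{u=a+k\}$, so that $P$ is the convex hull of its two extreme slices $P_0,P_1$ — which is precisely the structure $[P_0*P_1]^k$. This gluing step, rather than any single calculation, is where convexity, smoothness and the edge-length hypothesis must be combined and is the reason the inductive base cases are needed.
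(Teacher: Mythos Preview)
Your outline tracks the paper's proof closely: both directions are organised the same way, the ``easy'' implication uses the same product polynomial $\prod_{j=0}^{k}(u-j)$, and the ``hard'' implication is by induction on $n$ with the same dichotomy at a facet (either $f$ vanishes on $\operatorname{aff}(F)$ and one peels, or $f|_{\operatorname{aff}(F)}$ has degree $k+1$ so $F$ is Cayley by induction). Two points deserve comment.

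First, a minor one: your base case is mis-cited. Proposition~\ref{lowdim} in the paper shows that a smooth Cayley polytope of dimension $\le 3$ is \emph{strict}; it does not supply the jet-spanning characterisation in low dimension. The paper in fact starts the induction at $n=1$, where the statement is trivial, and does not invoke Perkinson at all. In the peeling branch you also need, as the paper spells out, that $P'=P\cap\{x_1\ge 1\}$ is normally equivalent to $P$ and that every edge of $P'$ has length $\ge k-1$, so that the hypotheses persist with $k$ replaced by $k-1$ and the peeling can be iterated down to a slab.

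Second, and more seriously: the step you flag as ``the main obstacle'' --- globalising the facet-wise Cayley directions into a single primitive $u$ with $u(P)=[0,k]$ --- is exactly where the paper's real work lies, and your proposal gives no mechanism for it. The paper's argument is not a direct propagation of fibering directions along facets; instead it (i) first disposes of the case where some edge has length $>k$ by a pigeonhole on the altitude directions of the $n$ coordinate-hyperplane facets (only $n-1$ directions are available, so two facets share an altitude, and Lemmas~\ref{triplets}--\ref{contcay} then trap $P$ in a slab); (ii) having reduced to all edges of length exactly $k$, uses Lemma~\ref{dilation} to scale to $k=1$; (iii) eliminates $2$-faces isomorphic to $\Delta_2$ via Lemma~\ref{twoheights} and the same pigeonhole; and finally (iv) with $k=1$, all edges of length $1$, and $\{0,\hat e_i,\hat e_i+\hat e_j\}\subset P$, forces $f=\sum_i c_i x_i(x_i-1)$ and shows either some $P\subset\{x_i\le 1\}$ or all $c_i=0$. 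None of this is suggested by your sketch, and without some concrete device of this sort the ``gluing'' remains a restatement of the goal rather than a proof.
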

\begin{remark}Recall that if $(X,\L)$ is associated to a Cayley polytope $P=[P_0*P_1]^k$, then there exist a birational morphism $\pi:X'\to X$, where $X'$ is a projective fiber bundle with fiber $F\cong \pn 1$, where $\pi^*\L|_F\cong \O_{\pn 1}(k)$ for all fibers $F$. Here $X'=\pn {} (L_0\oplus L_1)$, where the $L_i$ are line bundles on the toric variety associated to (the inner-normal fan of) the Minkowski sum $P_0+P_1$ (see \cite{Dickenstein} for details).
\end{remark}


An other way of measuring the local positivity of a nef line bundle $\L$ on a smooth projective variety $X$ is via so called Seshadri constants. For any point $x\in X$ Jean-Pierre Demailly \cite{Demailly} defined the  Seshadri constant at $x$ as the real number:
\[
\epsilon(X,\L;x):=\inf_{C\subseteq X} \frac{\L\cdot C}{m_x(C)}.
\]
Here the infimum is taken over all irreducible curves $C$ passing through $x$ and $m_x(C)$ is the  multiplicity of $C$ at $x$. An example of how Seshadri constants measure local positivity is the Seshadri criterium for ampleness which could be formulated as follows:
\begin{theorem}{\cite{AmpleHartshorne}}\label{seshadriample}
Let $X$ be a smooth projective variety and $\L$ a nef line bundle on $X$. Then $\L$ is ample if and only if $\inf_{x\in X}\epsilon(X,\L;x)>0$.
\end{theorem}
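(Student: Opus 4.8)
The plan is to prove the two implications separately, relying on the Nakai--Moishezon criterion and on a self-intersection computation on a one-point blow-up. For the forward implication, suppose $\L$ is ample; then some multiple $m\L$ is very ample and defines an embedding $X\hookrightarrow\pn N$ with $m\L=\O_X(1)$. For any irreducible curve $C$ through a point $x$ one has $\L\cdot C=\tfrac1m\deg C$, while the multiplicity of a curve at a point is bounded by its degree, $m_x(C)\le\deg C$ (a general hyperplane through $x$ meets $C$ in $\deg C$ points counted with multiplicity, at least $m_x(C)$ of which concentrate at $x$). Hence $\tfrac{\L\cdot C}{m_x(C)}\ge\tfrac1m$ for every $x$ and every $C$, so $\inf_{x\in X}\epsilon(X,\L;x)\ge\tfrac1m>0$.

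For the converse, write $\epsilon_0:=\inf_{x\in X}\epsilon(X,\L;x)>0$. By the Nakai--Moishezon criterion it suffices to show $\L^{\,d}\cdot V>0$ for every irreducible subvariety $V\subseteq X$ of dimension $d\ge1$. Fix such a $V$, pick a smooth point $x\in V$, and let $\pi:\tilde V=\Bl_x V\to V$ be the blow-up at $x$, with exceptional divisor $E\cong\pn{d-1}$ and $\O_E(E)\cong\O_{\pn{d-1}}(-1)$. The central observation is that the real class $M_t:=\pi^*(\L|_V)-tE$ is nef for $0\le t\le\epsilon(V,\L|_V;x)$: every irreducible curve in $\tilde V$ either lies in $E$, where $M_t\cdot\gamma=t\deg\gamma\ge0$, or is the strict transform $\tilde C$ of a curve $C\subseteq V$, where $M_t\cdot\tilde C=\L\cdot C-t\,m_x(C)$, which is nonnegative exactly when $t\le\L\cdot C/m_x(C)$ (using nefness of $\L$ when $C\not\ni x$). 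Moreover curves in $V$ through $x$ are curves in $X$ through $x$ with the same multiplicities, so $\epsilon(V,\L|_V;x)\ge\epsilon_0$.

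It then remains to compute the top self-intersection on $\tilde V$. Since $\pi^*(\L|_V)$ restricts trivially to $E$, all mixed terms $(\pi^*(\L|_V))^{d-b}\cdot E^b$ with $1\le b\le d-1$ vanish, and $E^d=(-1)^{d-1}$, so that $M_t^{\,d}=\L^{\,d}\cdot V-t^{d}$. A nef class has nonnegative top self-intersection, so choosing any rational $t$ with $0\le t<\epsilon_0\le\epsilon(V,\L|_V;x)$ gives $\L^{\,d}\cdot V-t^{d}=M_t^{\,d}\ge0$; letting $t\uparrow\epsilon_0$ yields $\L^{\,d}\cdot V\ge\epsilon_0^{\,d}>0$, and Nakai--Moishezon then shows that $\L$ is ample. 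The main obstacle is the converse, and within it the two ingredients that must be pinned down carefully: the identification of $\epsilon(V,\L|_V;x)$ with the nef threshold of $\pi^*(\L|_V)-tE$ (which requires classifying the curves on the blow-up and checking that the local computation $E\cdot\tilde C=m_x(C)$ remains valid when $V$ is singular away from $x$), together with the standard facts that nef classes have nonnegative self-intersection and that Nakai--Moishezon applies to possibly singular $V$.
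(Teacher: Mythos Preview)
The paper does not provide its own proof of this theorem; it is stated as a background result with a citation to \cite{AmpleHartshorne} (Hartshorne's \emph{Ample Subvarieties of Algebraic Varieties}), and the equivalent formulation Theorem~\ref{crit} is likewise quoted without proof. So there is no in-paper argument to compare against.

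Your argument is correct and is essentially the classical proof one finds in the literature (e.g.\ Lazarsfeld, \emph{Positivity in Algebraic Geometry~I}, Theorem~1.4.13 together with Proposition~5.1.9). The forward direction via $m_x(C)\le\deg C$ under a very ample embedding, and the converse via Nakai--Moishezon combined with the self-intersection inequality $\L^d\cdot V\ge\epsilon_0^{\,d}$ obtained from nefness of $\pi^*(\L|_V)-tE$ on the blow-up at a smooth point of $V$, are exactly the standard steps. The caveats you flag at the end (that Kleiman's nonnegativity of top self-intersection for nef classes, the identity $E\cdot\tilde C=m_x(C)$, and Nakai--Moishezon must all be invoked on the possibly singular variety $V$ or its blow-up) are genuine but well known to hold in the projective setting, so no gap remains.
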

Unfortunately Seshadri constants are in general very hard to compute and as a consequence there are few results in the general setting. There are however several results for surfaces, threefolds and abelian varieties; see for example \cite{primer}*{6.}, \cite{Cascini} and \cite{prodSandra}. When $X$ is toric one might expect that Seshadri constants  could be captured by convex geometric properties of the polytope associated to $(X,\L)$. This is indeed the case and Atsushi Ito has, in \cite{degen}, given combinatorial lower and upper bounds for Seshadri constants at various points on a polarized toric variety. Moreover Sandra Di Rocco has, in \cite{Sandra}, showed that if $X$ is smooth, then the Seshadri constant at a fixpoint $x$ of the torus action equals the lattice length of the shortest edge through the vertex $v(x)$ associated to $x$ in $P_\L$. She also proved that in this setting it is also true that $\epsilon(X,\L;x)$ obtains its minimal value at a fixpoint (see \cite{primer}). As a consequence the minimal value of $\epsilon(X,\L;x)$ on a smooth polarized toric variety is always a positive integer. Finally we have the following characterisation due to Ito
\begin{theorem}[\cite{algebro}]
Let $(X,\L)$ be a polarized toric variety. Then $P_\L\cong[P_0*P_1]^1$ if and only if $\epsilon(X,\L,x)=1$ at a very general point.
\end{theorem}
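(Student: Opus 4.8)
The plan is to reduce this statement about Seshadri constants to the jet-spanning characterisation of Theorem \ref{theTHM} at $k=1$, using two bridges: Di Rocco's formula expressing $\epsilon(X,\L;x)$ at a torus fixpoint in terms of edge lengths, and a semicontinuity argument transferring information between very general points and fixpoints. I would carry this out in the smooth case, where the tools of \cite{Sandra} and \cite{primer} apply directly; the general toric case should follow from the combinatorial bounds in \cite{degen}.

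First I would record two combinatorial facts. On the one hand, $\L$ is $k$-jet spanned at a fixpoint $x$ precisely when every edge of $P_\L$ through the vertex $v(x)$ has lattice length at least $k$ (see \cite{Sandra}); hence, applying Theorem \ref{theTHM} with $k=1$ (where the edge-length hypothesis is vacuous), $P_\L\cong[P_0*P_1]^1$ if and only if $\L$ is $1$-jet but not $2$-jet spanned at every point, which translates combinatorially into the condition that every vertex of $P_\L$ carries an incident edge of lattice length exactly $1$. On the other hand, Di Rocco's theorem identifies $\epsilon(X,\L;x)$ at a fixpoint $x$ with the lattice length of the shortest edge through $v(x)$, and shows (\cite{primer}) that $\min_{x\in X}\epsilon(X,\L;x)$ is attained at a fixpoint and is a positive integer. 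Combining these, I would observe that the Cayley condition $P_\L\cong[P_0*P_1]^1$ is equivalent to $\epsilon(X,\L;x)=1$ holding at every fixpoint $x$.

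It then remains to pass between very general points and fixpoints. For the direction $P_\L\cong[P_0*P_1]^1\Rightarrow\epsilon=1$, I would invoke the $\pn 1$-bundle structure from the Remark following Theorem \ref{theTHM}: the birational morphism $\pi\colon X'\to X$ has fibres $F\cong\pn 1$ with $\pi^{*}\L|_{F}\cong\O_{\pn 1}(1)$, and their images sweep out a dense open subset of $X$ by curves $C$ satisfying $\L\cdot C=1$ and $m_x(C)=1$ at a general $x\in C$. This forces $\epsilon(X,\L;x)\le 1$ on a dense open set, while the integrality lower bound gives $\epsilon\ge 1$ everywhere; hence the maximal Seshadri value equals $1$, and since this maximum is attained at very general points, $\epsilon=1$ at a very general point. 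Conversely, if $\epsilon(X,\L;x_0)=1$ at a very general $x_0$, then by lower semicontinuity the maximal value $\max_{x}\epsilon(X,\L;x)$ equals $1$; in particular $\epsilon(X,\L;x)\le 1$ at every fixpoint, and since these values are positive integers they all equal $1$, yielding the Cayley structure through the equivalence established above.

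The main obstacle I anticipate is the careful handling of the \emph{very general} hypothesis: one must justify that $\epsilon$ attains its maximal value at very general points and that, on a smooth toric variety, the integral minimum at a fixpoint forces every fixpoint value to be integral, so that a single value $1$ at a very general point collapses the entire range to $1$. A secondary technical point is producing curves through general, rather than only torus-fixed, points that realise the ratio $1$; the fibre-bundle description from \cite{Dickenstein} recalled in the Remark is exactly what makes this possible, and checking that degree and multiplicity behave as claimed under the birational morphism $\pi$ is the step demanding the most care.
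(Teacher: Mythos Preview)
This theorem is not proved in the paper: it is quoted from Ito \cite{algebro} and used as input. The paper's own contribution is Corollary~\ref{equivTHM}, which the paper explicitly says ``specializes to Ito's characterisation, in the smooth setting, when $k=1$''. So there is no proof in the paper to compare against; what you have written is an attempt to \emph{derive} the smooth case of Ito's theorem from Theorem~\ref{theTHM} and Corollary~\ref{equivTHM}.

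That attempt contains a genuine error. You assert that ``the Cayley condition $P_\L\cong[P_0*P_1]^1$ is equivalent to $\epsilon(X,\L;x)=1$ holding at every fixpoint $x$'', obtained by translating Theorem~\ref{theTHM} into the vertex condition ``every vertex carries an incident edge of lattice length exactly $1$''. This equivalence is false: the Del~Pezzo hexagon of Example~\ref{delPezzo} has every edge of length $1$, hence $\epsilon=1$ at every fixpoint by Proposition~\ref{epstok}, yet the paper explicitly notes that it is \emph{not} a Cayley polytope and that $\epsilon=2$ at the general point. The mistake is that Theorem~\ref{theTHM} requires $s(\L,x)=1$ at \emph{every} point, including the general one, and this is strictly stronger than the fixpoint condition; ``every vertex has an edge of length $1$'' only encodes $s=1$ at the fixpoints.

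Your reverse-direction argument can be repaired, but not through that equivalence. Your semicontinuity step already gives $\epsilon=1$ at the general point (not just at the fixpoints), so you actually land in condition~(iv) of Corollary~\ref{equivTHM}; invoking the implication (iv)$\Rightarrow$(v) there---which passes through Theorem~\ref{Demlimit} to bound $s(\L,1)\le\epsilon(X,\L;1)=1$---is what closes the loop. Finally, note that Ito's statement is for arbitrary polarized toric varieties, whereas everything you invoke (Proposition~\ref{epstok}, Theorem~\ref{theTHM}, Corollary~\ref{equivTHM}) requires smoothness; your parenthetical ``the general toric case should follow from the combinatorial bounds in \cite{degen}'' is not an argument.
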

 
In Example \ref{delPezzo} we give an example showing that a direct generalization of this result to higher order Cayley polytopes is not possible. However Theorem \ref{theTHM} gives the following corollary, which specializes to Ito's characterisation, in the smooth setting, when $k=1$. 

\begin{corollary}\label{equivTHM}
Let $(X,\L)$ be a smooth polarized toric variety, let $P_\L$ be the corresponding smooth polytope and let $k\in \N$. Then the following are equivalent:
\begin{enumerate}
\item{$s(\L,x)=k$ at every point $x\in X$.}
\item{$s(\L,x)=k$ at the fixpoints and at the general point.}
\item{$\epsilon(X,\L;x)=k$ at every point $x\in X$.}
\item{$\epsilon(X,\L;x)=k$ at the fixpoints and at the general point.}
\item{$P_\L\cong[P_0*P_1]^k$ for some lower dimensional polytopes $P_0$ and $P_1$ and every edge of $P$ has length at least $k$.}
\end{enumerate}
Here $s(\L,x)$ is the largest natural number $k$ such that $\L$ is $k$-jet spanned at $x\in X$.
\end{corollary}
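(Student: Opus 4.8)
The plan is to organise the five conditions into the jet\nobreakdash-separation pair $\{(1),(2)\}$, the Seshadri pair $\{(3),(4)\}$, and the combinatorial condition $(5)$, and to glue them using the behaviour of both invariants on $\T$\nobreakdash-orbits. The first observation costs nothing: since $s(\L,x)$ is \emph{defined} as the largest $k$ for which $\L$ is $k$-jet spanned at $x$, the statement ``$s(\L,x)=k$ at every point'' is literally ``$\L$ is $k$-jet spanned but not $(k+1)$-jet spanned at every point''. Hence $(1)\Leftrightarrow(5)$ is precisely Theorem \ref{theTHM}, and all the remaining work is to attach the Seshadri conditions and the ``fixpoints and general point'' reformulations to this equivalence.

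The engine for the reformulations is a semicontinuity\nobreakdash-plus\nobreakdash-symmetry argument that I would isolate as a preliminary lemma. Both $x\mapsto s(\L,x)$ and $x\mapsto\epsilon(X,\L;x)$ are lower semicontinuous (for $s$ because surjectivity of the jet map $j_x^k$ is an open condition on $x$; for $\epsilon$ by the standard lower semicontinuity of Seshadri constants, see \cite{primer}), and both are constant on $\T$-orbits because $\L$ is torus\nobreakdash-equivariant. On a toric variety there are finitely many orbits; the dense orbit specialises to every point, and every orbit has a fixpoint in its closure. Feeding these into lower semicontinuity yields, for $f\in\{s(\L,\cdot),\epsilon(X,\L;\cdot)\}$, that $f$ attains its maximum at the general point and its minimum at a fixpoint, so that $\min_x f(x)=\min_{\mathrm{fixpoints}}f$ and $\max_x f(x)=f(\mathrm{general})$. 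This immediately gives $(2)\Rightarrow(1)$ and $(4)\Rightarrow(3)$: if $f\equiv k$ at the fixpoints and at the general point, then $k=\min_x f\le f(x)\le\max_x f=k$ for every $x$. The reverse implications $(1)\Rightarrow(2)$ and $(3)\Rightarrow(4)$ are trivial, so $(1)\Leftrightarrow(2)$ and $(3)\Leftrightarrow(4)$.

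It remains to connect $\{(1),(2)\}$ with $\{(3),(4),(5)\}$, and here I would use two comparisons: first, the global inequality $s(\L,x)\le\epsilon(X,\L;x)$ between jet separation and the Seshadri constant; second, the equality $s(\L,x)=\epsilon(X,\L;x)$ at a fixpoint, since by Di Rocco \cite{Sandra} both equal the lattice length of the shortest edge through the vertex $v(x)$. For $(3)\Rightarrow(1)$, assume $\epsilon\equiv k$: at every fixpoint $s=\epsilon=k$, while at the general point $k\le\max_x s=s(\mathrm{general})\le\epsilon(\mathrm{general})=k$, so $s$ equals $k$ at the fixpoints and the general point; then $(2)\Rightarrow(1)$ closes the loop, and Theorem \ref{theTHM} recovers $(5)$. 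For $(5)\Rightarrow(3)$, condition $(5)$ forces every edge to have lattice length $\ge k$, so $\epsilon\ge k$ at every fixpoint, whence $\epsilon\ge k$ everywhere since the minimum is attained at a fixpoint; for the matching upper bound I would invoke the fibration of the Remark, the birational $\pi\colon X'\to X$ with $\pn1$-fibres $F$ satisfying $\pi^*\L|_F\cong\O_{\pn1}(k)$. Over a general point $\pi$ is an isomorphism, so the image $C$ of a fibre through a general $x$ is smooth with $\L\cdot C=k$ and $m_x(C)=1$, giving $\epsilon(\mathrm{general})\le\L\cdot C/m_x(C)=k$ and hence $\epsilon\le k$ everywhere by the maximum statement. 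Thus $\epsilon\equiv k$, which is $(3)$.

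Stringing these together, $(1)\Leftrightarrow(5)\Rightarrow(3)\Rightarrow(1)$ makes $(1),(3),(5)$ equivalent, while $(1)\Leftrightarrow(2)$ and $(3)\Leftrightarrow(4)$ attach the remaining two. The step I expect to be most delicate is the bridging paragraph: securing the two comparisons between $s$ and $\epsilon$ in exactly the form needed --- the global inequality $s\le\epsilon$ and the fixpoint equality $s=\epsilon$ --- and checking that the fibre curve $C$ genuinely passes through a general point with the claimed intersection number and multiplicity. By contrast, once the semicontinuity\nobreakdash-and\nobreakdash-symmetry lemma is in place, the reformulations $(2)$ and $(4)$ are essentially formal.
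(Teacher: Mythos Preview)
Your proof is correct and follows a somewhat different logical route from the paper's. Both arguments hinge on Theorem~\ref{theTHM} for the core equivalence with $(5)$, but they organise the remaining implications differently.

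The paper does not invoke semicontinuity of $\epsilon$ directly. Instead it uses Demailly's limit formula $\epsilon(X,\L;x)=\lim_t s(t\L,x)/t$ (Theorem~\ref{Demlimit}) as the bridge between $s$ and $\epsilon$: for $(4)\Rightarrow(2)$ it combines this with Proposition~\ref{epstok} and the observation that $t\L$ is $tk$-jet spanned when $\L$ is $k$-jet spanned; for $(4)\Rightarrow(3)$ it first derives $s\equiv k$ (hence $\epsilon\ge k$), and then gets the upper bound $\epsilon(x)\le\epsilon(1)$ from $s(t\L,x)\le s(t\L,1)$ and the limit formula. For the step $(5)\Rightarrow(4)$ the paper uses Ito's combinatorial bound $s_2(P)\le k$ (Example~\ref{cayseps}), obtained by simply projecting $[P_0*P_1]^k$ onto the last coordinate, rather than producing an explicit curve. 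Your approach trades Demailly's formula for two more geometric inputs: the global inequality $s(\L,x)\le\epsilon(X,\L;x)$ and the fibre curve from the Remark. Either package works; yours is more direct once the inputs are secured, while the paper's is more self-contained given what has already been stated.

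Two small points of care. First, ``standard lower semicontinuity of Seshadri constants'' is a slight overstatement: what is actually available (and what you use) is that the maximum of $\epsilon$ is attained at the very general point (standard) and that on a smooth polarized toric variety the minimum is attained at a fixpoint (Di~Rocco, \cite{primer}, as recalled in the introduction). These two facts suffice for your $(4)\Rightarrow(3)$, but they are not the same as full Zariski lower semicontinuity of $\epsilon$. Second, in your $(5)\Rightarrow(3)$ the assertion $\L\cdot C=k$ presumes that $\pi|_F$ is birational onto its image; if it had degree $d$, the projection formula would give $\L\cdot C=k/d$. Either justify $d=1$ for the general fibre, or note that $\L\cdot C\le k$ already suffices for the bound $\epsilon(\text{general})\le k$. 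The paper sidesteps this by using the lattice-width bound $s_2(P)\le k$, which is a one-line combinatorial check.
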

By the above the results of this paper has two facets. On the one hand they give a characterisation of a large class of generalised Cayley polytopes and thereby generalise the characterisations of Perkinson and Ito (in the smooth setting). It would be intriguing to find a similar algebro geometric characterisation of all general Cayley polytopes, at least in the smooth setting. On the other hand our results provide an equivalence between Seshadri constants and the numbers $s(\L,x)$ for smooth toric varieties. The exact nature of the relationship between $s(\L,x)$ and $\epsilon(X,\L;x)$ is in general an open and interesting question, very much related to Demailly's original motivation for introducing Seshadri constants \cite{BauerSzemberg}, \cite{Lazarsfeld}.
\section{Background}

\subsection{Toric geometry}

In this paper we investigate local positivity on smooth polarized toric varieties. Therefore we briefly state some basic results of toric geometry that we will need in subsequent parts of the paper. References for this section are \cite{Cox} and \cite{CoxHC}. Throughout this paper we will work over $\C$ and use the fact that every ample line bundle $\L$ on a smooth toric variety is very ample. Thus a \emph{smooth polarized toric variety} $(X,\L)$ for us means a smooth toric variety $X$ over $\C$ together with a very ample line bundle on $X$.

Toric geometry is characterised by a close connection to convex geometry. For example normal separated toric varieties are in bijection with polyhedral fans, while smooth polarized toric varieties $(X,\L)$ correspond to smooth convex lattice polytopes $P_\L$. Recall that if $M$ is a lattice, a polytope $P\subseteq M_\R:=M\otimes \R$ is called a \emph{lattice polytope} if every vertex of $P$ lies in $M$. A polytope $P\subseteq M_\R$ is called \emph{smooth} if the edge-directions at every vertex form a basis for $M$. Moreover a normal separated toric variety $X_\Sigma$ is complete if and only if the corresponding fan $\Sigma\subseteq N_\R=N\otimes \R$ is complete, i.e. if $|\Sigma|:=\bigcup_{\sigma\in \Sigma} \sigma =N_\R$, where $N$ is the lattice dual to $M$. To shorten notation we will write \emph{polytope} in place of convex lattice polytope and \emph{toric variety} in place of complete normal separated toric variety. Moreover we will follow to the standard convention that if $M$ is a lattice, then the dual lattice of $M$ is denoted by $N$. In particular for a polarized toric variety $(X,\L)$, the associated polytope lies in $M_\R=M\otimes_\R \R$ while the fan associated to $X$ consists of a family of cones in $N_\R=N\otimes_\R \R$.


Let $\Sigma(1)$ be the set of rays in the fan corresponding to the toric variety $X_\Sigma$ and let $x_\rho$ be a variable for each $\rho \in \Sigma(1)$. There is a 1-1 correspondence between effective torus-invariant Weil divisors on $X_\Sigma$ and monomials in $\C[x_\rho:\rho\in \Sigma(1)]$ given by $D=\sum_{\rho\in \Sigma(1)} a_\rho D_\rho\mapsto \prod_{\rho\in \Sigma(1)} x_\rho^{a_\rho}=:x^D$. We define the \emph{homogeneous coordinate ring} of $X_\Sigma$ as $\C[x_\rho:\rho \in \Sigma(1)]$ graded by the class group of $X_\Sigma$, i.e. $\deg(x^D)=\deg(x^E)$ if and only if $\exists m\in M$ such that  $x^D=x^{E+\sum_{\rho\in \Sigma(1)} \langle m,n_\rho\rangle D_{n_\rho}}$. 

A nice property of the polytope description of $(X,\L)$ is the following theorem 

\begin{theorem}[\cite{Cox}]\label{gsectpoly}
Let $X$ be a toric variety and $\L$ an ample line bundle on $X$ associated to a lattice polytope $P_\L\subset M_\R$. Then
\[
H^0(X,\L)\cong\bigoplus_{m\in M\cap P_\L}\C \langle x^m\rangle
\]
where $m=(m_1,m_2,\dots,m_n)$  for some choice of basis for $M$, $x=(x_1,\dots,x_n)$ and $x^m=x_1^{m_1}x_2^{m_2}\cdots x_n^{m_n}$.
\end{theorem}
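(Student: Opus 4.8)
The plan is to recover this standard dictionary between global sections and lattice points from the torus action together with the description of $\L$ as a torus-invariant divisor. First I would write $\L\cong\O_X(D)$ for a $\T$-invariant Cartier divisor $D=\sum_{\rho\in\Sigma(1)}a_\rho D_\rho$, so that the polytope associated to $\L$ is
\[
P_\L=\{m\in M_\R:\langle m,n_\rho\rangle\ge -a_\rho\text{ for all }\rho\in\Sigma(1)\},
\]
where $n_\rho$ denotes the primitive generator of the ray $\rho$. Recalling that $H^0(X,\L)$ is identified with $\{f\in\C(X):\operatorname{div}(f)+D\ge 0\}\cup\{0\}$, the problem reduces to deciding which rational functions satisfy this inequality.

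The key structural input is that $\T$ acts on the vector space $H^0(X,\L)$, which is finite-dimensional because $X$ is complete, and that, since $\T$ is a torus, this representation decomposes as a direct sum of one-dimensional weight spaces indexed by characters $m\in M$. As the weight vectors are exactly the characters $x^m\in\C(X)$, one obtains
\[
H^0(X,\L)=\bigoplus_{m\in M}\bigl(\C\langle x^m\rangle\cap H^0(X,\L)\bigr),
\]
so it remains only to determine, for each $m\in M$, whether $x^m$ is a global section.

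Here the computation is local and combinatorial: the principal divisor of a character is $\operatorname{div}(x^m)=\sum_{\rho\in\Sigma(1)}\langle m,n_\rho\rangle D_\rho$, so $\operatorname{div}(x^m)+D=\sum_\rho(\langle m,n_\rho\rangle+a_\rho)D_\rho$ is effective if and only if $\langle m,n_\rho\rangle\ge -a_\rho$ for every ray $\rho$, which is precisely the condition $m\in P_\L$. Hence $\C\langle x^m\rangle\cap H^0(X,\L)$ equals $\C\langle x^m\rangle$ when $m\in P_\L\cap M$ and is $0$ otherwise, giving the claimed isomorphism $H^0(X,\L)\cong\bigoplus_{m\in M\cap P_\L}\C\langle x^m\rangle$. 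The only two points requiring care are the identification of $H^0(X,\L)$ with the space of $f$ satisfying $\operatorname{div}(f)+D\ge 0$ and the fact that every $\T$-eigenfunction in $\C(X)$ is a character; I expect the complete-reducibility and weight-space step to be the main conceptual ingredient, while the divisor computation is routine. Both are standard and may be cited from \cite{Cox}.
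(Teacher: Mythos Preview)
The paper does not give its own proof of this statement: Theorem~\ref{gsectpoly} is simply cited from \cite{Cox} as background. Your proposal is the standard argument (torus-equivariant decomposition of $H^0$ into weight spaces, then identifying which characters satisfy $\operatorname{div}(x^m)+D\ge 0$ via the polytope inequalities), and it is correct; it is essentially the proof one finds in the cited reference.
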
 

Now assume $(X,\L)$ is a smooth polarized toric variety, associated to the polytope $P_\L=\{m\in M_\R:\langle m,n_\rho\rangle>-a_\rho,\ \forall \rho\in \Sigma(1)\}$ and that $P_\L\cap M=\{m_0,m_1,\dots,m_N\}$. Then the torus-invariant Weil divisor $D_P=\sum_{\rho\in\Sigma(1)}a_\rho D_\rho$ is such that $\O_X(D)\cong \L$. Using this combinatorial description the closed embedding $X\hookrightarrow \pn { } (H^0(X,\L))\cong \pn N$, given by the complete linear series $|\L|$, can be written as
\begin{align}\label{emb}
\varphi:X&\to \pn N\\
x&\mapsto (x^{D_{m_0}+D_P},x^{D_{m_1}+D_P},\dots x^{D_{m_N}+D_P})
\end{align}
where $D_{m_i}=\sum_{\rho\in \Sigma(1)} \langle m_i,n_\rho\rangle D_\rho$ (see \cite{CoxHC} for details). For every maximal cone $\sigma \in \Sigma(n)$, it turns out that $\varphi|_{U_\sigma}$, i.e. the restriction of $\varphi$ to the affine chart $U_\sigma$ corresponding to $\sigma$, can be obtained by setting $x_\rho=1$ in \ref{emb} for every $\rho\not\in \sigma(1)$. The exponents appearing in $\varphi|_{U_\sigma}$ coincide with the lattice points obtained by translating $P_\L$ so that the vertex corresponding to $\sigma$ is at the origin and writing every $m_i\in P_\L$ in the basis consisting of the rays generating the cone dual to $\sigma$.

Now let $(X,\L)$ be a smooth polarized toric variety and let  $\pi:\tilde{X}\to X$ be the blow-up of $X$ along a torus-invariant subvariety $V$. Consider the line bundle $\L':=\pi^*\L-E$ on $\tilde{X}$ where $E$ is the exceptional divisor. It is a well known fact that the fan of $\tilde{X}$ is obtained from the fan of $X$ by a specific stellar subdivision (see \cite{Ewald}). If the torus-invariant Weil divisor corresponding to $\L$ is $D_\L=\sum_{\rho\in \Sigma_X(1)} a_\rho D_\rho \in \Z^{\Sigma(1)}$, then the divosor corresponding to $\L'$ is $D_{\L'}=\sum_{\rho' \in \Sigma_{\tilde{X}}}a_{\rho} D_{\rho'}-E$ where $\rho':=\pi_{\#}^{-1}(\rho)\in \Sigma_{\tilde{X}}$ is the ray corresponding to the ray $\rho\in \Sigma_X$. Thus if $P_\L:=\{x\in M_\R: \langle x,\mu_\rho\rangle \ge -a_\rho\quad \forall \rho\in \Sigma_X\}$ is the polytope corresponding to $(X,\L)$, then the polytope corresponding to $(\tilde{X},\L')$ is $P_{\L'}:=\{x\in M_\R: \langle x,\mu_\rho \rangle  \ge -a_\rho \quad \forall \rho \in \Sigma_X,  \langle x,\mu_{\tau}\rangle \ge 1\}$. Here $\mu_{\tau}$ is the primitive vector along the ray, associated to $E$, introduced in the stellar subdivision of $\Sigma_X$. In other words $P_{\L'}$ is obtained by cutting the polytope $P_\L$ with the halfspace $H_{\tau}^+=\{x\in M_\R: \langle x,\mu_{\tau} \rangle \ge 1\}$. For example blowing-up a torus fixpoint of $X$ corresponds to truncating a vertex of the associated polytope.

\subsection{Generation of k-jets and osculating spaces}
\begin{definition}
Let $\L$ be  a line bundle on a smooth variety $X$ and $x\in X$ a point with maximal ideal $\m_x\subseteq \O_X$. Consider the natural map 
\[
j^k_x:H^0(X,\L)\to H^0(X,\L\otimes(\O_X/\m_x^{k+1})).
\]
The (projective) linear subspace $\T_x^k(X,\L):=\pn { }(\im(j_x^k))$ of $\pn { } (H^0(X,\L\otimes(\O_X/\m_x^{k+1}))$ is called the \emph{osculating space of order} $k$ at $x\in X$. When the map $j^k_x$ is onto we say that $\L$ is $k$\emph{-jet spanned} at $x\in X$. Moreover if $\L$ is $k$-jet spanned at every point, we say that $\L$ is $k$-jet spanned on $X$. Finally we will denote the largest $k$ such that $X$ is $k$-jet spanned at $x\in X$ by $s(\L,x)$.
\end{definition}

Choose local coordinates around a point $x\in X$. Then on the stalk level the map $H^0(X,\L)\to H^0(X,\L\otimes (\O_X/\m_{x}^{k+1}))$ takes the germ of a section $s$ to the terms of degree at most $k$ in the Taylor expansion of $s$ around $x$. As a consequence if $(X,\L)$ is a $n$-dimensional polarized variety and $s_0,\dots,s_M$ is a basis of $H^0(X,\L)$, then $\L$ is $k$-jet spanned at $p\in X$ if and only if the \emph{matrix of $k$-jets} $J_k(\L):=(J_k(\L))_{i,j}:=(\partial^{|a|}/\partial_{x_{a_1}}\partial_{x_{a_2}}\cdots \partial_{x_{a_n}}(s_i))_{0\le i \le M,0\le |a|\le k}$, has maximal rank when evaluated at $x=p$. Here $a=(a_1,a_2,\dots,a_n)\in \N^n$ and $|a|=|a_1+a_2+\dots+a_n|$. Now if $(X,\L)$ is a polarized toric variety then Theorem \ref{gsectpoly} allows us to choose a monomial basis for $H^0(X,\L)$, hence the matrix $J_k(\L)$ will have a particularly simple shape. The simple shape of $J_k(\L)$ has nice implications for the combinatorial description of $(X,\L)$.

Combinatorial interpretations of what it means for the osculating space of order $k$ to be full dimensional at a point in a polarized toric variety, has been studied for example in: \cite{Sandra}, \cite{Perkinson} and \cite{degen}. In establishing our new results, we will repeatably use the following few facts, taken from these papers. 

\begin{prop}[\cite{Sandra},\cite{Perkinson}]\label{bigprop}
Let $(X,\L)$ be a smooth polarized toric variety of dimension $n$ and let $P_\L\subset M_\R$ be the corresponding polytope. Then the following hold:
\begin{enumerate}
\item{$\L$ is $k$-jet spanned at a fixpoint $x(v)$ if and only if every edge through the corresponding vertex $v\in P_\L$ has lattice length at least $k$.}
\item{If $\L$ is $k$-jet spanned at every fixpoint, then $\L$ is $k$-jet spanned at every point $x\in X$.}
\item{If there exist a degree $k$ polynomial in $n$ variables vanishing on $P\cap M$, then the osculating space of order $k$ is not full dimensional at the general point.}
\end{enumerate}
\end{prop}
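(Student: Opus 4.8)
The plan is to route all three statements through the matrix of $k$-jets $J_k(\L)$ written in the monomial basis of $H^0(X,\L)$ provided by Theorem \ref{gsectpoly}, using that in a toric chart the sections are monomials so that $J_k(\L)$ becomes combinatorial; throughout, recall that $\L$ is $k$-jet spanned at $x$ precisely when $J_k(\L)$ has full column rank $\binom{n+k}{k}$ at $x$. For part (1) I would work in the smooth affine chart $U_\sigma\cong\C^n$ attached to the maximal cone $\sigma$ dual to the vertex $v=v(x)$, with coordinates $y_1,\dots,y_n$ and with the fixpoint $x(v)$ at the origin. As recorded in the discussion of $\varphi|_{U_\sigma}$, each basis section $x^m$ restricts to the monomial $y^{a(m)}$, where $a(m)\in\N^n$ records $m-v$ in the basis of edge directions at $v$ (a lattice basis, by smoothness). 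Since $\partial^b y^{a}|_0=a!\,\delta_{a,b}$, the matrix $J_k(\L)$ at the origin is a scaled partial permutation matrix: its row indexed by $m$ has a single nonzero entry, in the column $b=a(m)$. Hence its rank equals the number of multi-indices $b$ with $|b|\le k$ lying in $(P_\L-v)\cap M$, and $\L$ is $k$-jet spanned at $x(v)$ iff every lattice point of $\conv(0,ke_1,\dots,ke_n)$, i.e. every $b\in\N^n$ with $|b|\le k$, lies in $P_\L-v$. By convexity this holds iff each vertex $ke_i$ does, that is, iff every edge through $v$ has lattice length at least $k$.

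For part (2) I would argue by semicontinuity together with the torus action. The locus $U\subseteq X$ where $j^k_x$ is onto is open, being the complement of the common zero locus of the $\binom{n+k}{k}\times\binom{n+k}{k}$ minors of $J_k(\L)$, and it is invariant under the torus $T$ since translation by $t\in T$ is an automorphism of $(X,\L)$ intertwining $j^k_x$ with $j^k_{t\cdot x}$. If $U\ne X$ then $Z:=X\setminus U$ is a nonempty closed $T$-invariant subset; choosing $z\in Z$, its orbit closure $\overline{T\cdot z}$ lies in $Z$ and, by the orbit-cone correspondence on the complete variety $X$ (every cone is a face of a maximal cone, whose orbit is a fixpoint), contains a torus fixpoint. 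That fixpoint would then lie in $Z$, contradicting the hypothesis that $\L$ is $k$-jet spanned at every fixpoint; hence $U=X$.

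For part (3) I would evaluate the rank of $J_k(\L)$ at a point $t$ of the open torus, where it is convenient to replace the operators $\partial^a$ by the logarithmic derivatives $\theta^a$ with $\theta_i=t_i\partial_{t_i}$: since the $t_i$ are units on the torus, this is an invertible change of the spanning set of order-$\le k$ operators (triangular with unit diagonal in the order filtration) and so leaves the rank unchanged. Because $\theta^a t^m=m^a t^m$ with $m^a:=m_1^{a_1}\cdots m_n^{a_n}$, factoring the unit $t^m$ out of each row shows that the rank of $J_k(\L)$ at $t$ equals that of the generalized Vandermonde matrix $(m^a)_{m\in P_\L\cap M,\ |a|\le k}$, which is independent of $t$. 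A linear relation among its columns is a tuple $(c_a)$ with $\sum_{|a|\le k}c_a m^a=0$ for all $m\in P_\L\cap M$, i.e. exactly a nonzero polynomial $f(z)=\sum_{|a|\le k}c_a z^a$ of degree $\le k$ vanishing on $P_\L\cap M$. Thus the existence of such an $f$ forces the rank below $\binom{n+k}{k}$ at every torus point, so $\T_x^k(X,\L)$ is not full dimensional at the general point.

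I expect the crux to be the reduction in part (3): recognizing that, after passing to logarithmic derivatives and clearing the monomial factors, the $k$-jet matrix at a general point is a generalized Vandermonde matrix whose column relations are precisely the degree-$\le k$ polynomials vanishing on $P_\L\cap M$. The two technical points that must be justified with care are that the swap $\partial^a\rightsquigarrow\theta^a$ genuinely preserves the rank at torus points, and, in part (2), that a nonempty closed $T$-invariant subset of a complete toric variety necessarily meets the fixpoint set; both are standard but are where the toric hypotheses are essential.
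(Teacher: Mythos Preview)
Your proposal is correct in all three parts. Note, however, that the paper does not supply its own proof of this proposition: it is stated as a citation of \cite{Sandra} and \cite{Perkinson} and used as a black box thereafter, so there is no in-paper argument to compare against. Your approach is the standard one and matches what those references do; in particular, your reduction in part (3) via logarithmic derivatives $\theta_i=t_i\partial_{t_i}$ is exactly Perkinson's device for turning the $k$-jet matrix at a torus point into the generalized Vandermonde matrix $(m^a)$, and your part (2) is the usual semicontinuity-plus-orbit-closure argument (the paper's Remark \ref{fitideal} phrases the same idea through the monomiality of the fitting ideals). One small remark on part (1): after concluding that $k$-jet spannedness at $x(v)$ is equivalent to $ke_i\in P_\L-v$ for each $i$, you should make explicit why this is the same as the edge having lattice length $\ge k$; this uses that the edge through $v$ in direction $e_i$ equals $(P_\L-v)\cap\R_{\ge 0}e_i$, which holds because in the smooth chart the facets through $v$ lie in the coordinate hyperplanes.
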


\begin{definition}
Let $J_k(\L)$ be the matrix of $k$-jets of a polarized toric variety $(X,\L)$. The ideal $F_k^i(\L)$ generated by the determinants of the $i\times i$-minors of $J_k(\L)$ is called the $i$:th \emph{fitting ideal} of $J_k(\L)$ 
\end{definition}

Note that the rank of $J_k(\L)$ at a point $p\in X$ is the largest $r$ such that $F_{k}^r(\L)(p)=(1)$.

\begin{remark}\label{fitideal}
It is clear form Leibniz formula and the definition of $J_k(\L)$ that the fitting ideals $F_k^i(\L)$ are in fact monomial ideals. By basic properties of monomials evaluated over $\C^n$ we see that:
\begin{enumerate}
\item{$F_{k}^r(\L)(p)=(1)$ at a general point if and only if $F_k^r(\L)(1,1,\dots,1)=(1)$}
\item{The maximal rank of $J_k(\L)$ is obtained at the general point.}
\item{If $X$ is smooth and $s(\L,x)=k$ at the fixpoints and at the general point, then $s(\L,x)=k$ at every point by Proposition \ref{bigprop}.}
\end{enumerate}
\end{remark}
In \cite{Perkinson}, Perkinson has classified all smooth polarized toric surfaces and threefolds $(X,\L)$ such that $\L$ is $k$-jet spanned, but not $(k+1)$-jet spanned, at every point. From his classification one can derive the following:

\begin{theorem}\label{perk}
Let $X_P\hookrightarrow \pn N$ be the embedding of a smooth toric variety of dimension $\le3$ associated to the complete linear series $|\L_P|$.  If $\L_P$ is $k$-jet spanned, but not $(k+1)$-jet spanned, at every point $x\in X_P$, then $X_P$ is a projective (fiber) bundle.
\end{theorem}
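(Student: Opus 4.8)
The plan is to prove the statement by a direct inspection of Perkinson's classification in \cite{Perkinson}, reading off a projective bundle structure from the explicit list of polytopes he produces. First I would translate the hypothesis into combinatorial data using Proposition \ref{bigprop}: since $\L_P$ is $k$-jet spanned at every point, parts (1) and (2) force every edge of $P$ to have lattice length at least $k$; since $\L_P$ fails to be $(k+1)$-jet spanned at every fixpoint, part (1) forces that through every vertex of $P$ there passes an edge of lattice length exactly $k$. This is precisely the combinatorial situation that Perkinson enumerates for $\dim X_P\le 3$, so the proof reduces to matching each polytope on his lists to the fan of a projective bundle.

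For surfaces I would go through the two families that occur. If $P$ is unimodularly equivalent to a dilated simplex, the edge-length constraints pin it down to $P\cong k\Delta_2$, whence $X_P\cong \pn 2$, a projective bundle over a point. Otherwise $P$ is a quadrilateral whose two ``fibre'' edges have lattice length $k$, and its normal fan then visibly splits off a one-dimensional direction; reading this off identifies $X_P$ as a Hirzebruch surface or $\pn 1\times\pn 1$, i.e. a $\pn 1$-bundle over $\pn 1$. In both cases $X_P$ is a projective bundle.

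For threefolds the argument is the same in spirit but longer. The simplicial case again gives $P\cong k\Delta_3$ and hence $X_P\cong \pn 3$, while every remaining polytope on the list is an order-$k$ Cayley polytope $[P_0*P_1]^k$ over two polygons whose normal fan splits off the fibre direction. Reading off this splitting, I would identify $X_P$ as a $\pn 1$-bundle over the smooth toric surface associated to the Minkowski sum $P_0+P_1$. The bookkeeping here --- checking that each entry really does carry such a splitting and computing the base surface --- is routine but has to be carried out case by case.

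The main obstacle is the subtlety already flagged in the Remark following Theorem \ref{theTHM}: for a general order-$k$ Cayley polytope the $\pn 1$-fibration is only realised after a birational morphism $\pi:X'\to X_P$, so a priori $X_P$ itself need not be a genuine bundle. The crux is therefore to verify that, under the smoothness hypothesis together with the edge-length conditions extracted above, no such modification is needed in dimension $\le 3$ --- equivalently, that the base polytopes $P_0$ and $P_1$ are normally equivalent (or one of them is a point) in every case surviving the classification. Once this is checked the fan of $X_P$ splits honestly and the projective bundle structure is immediate; confirming that Perkinson's list is exhaustive and harbours no stray non-bundle case is the only remaining point that needs care.
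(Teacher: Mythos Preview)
Your proposal is correct and follows essentially the same route as the paper's proof: both arguments reduce to inspecting Perkinson's classification and verifying that every polytope appearing there corresponds to a projective bundle. The paper compresses your case-by-case verification into one sentence by observing that each such polytope is a \emph{strict} Cayley polytope and then invoking Proposition~\ref{cayfiber}; your explicit identification of the obstacle (that a general Cayley polytope only yields a bundle after a birational modification, so one must check normal equivalence of $P_0$ and $P_1$) is exactly what the word ``strict'' is encoding. The paper also notes that an independent proof is available via Theorem~\ref{theTHM} together with Proposition~\ref{lowdim}, which carries out precisely the normal-equivalence check you describe in the threefold case.
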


\begin{proof}
The embedding given by the complete linear series $|\L_P|$ is given by the global sections corresponding to every lattice point in the associated polytope $P$. The polytopes of the classification in \cite{Perkinson} meeting this criteria are readily seen to be so called strict Cayley polytopes which in turn corresponds to projective fiber bundles (see Definition \ref{caydef} and Proposition \ref{cayfiber} below).

\end{proof}
Theorem \ref{perk} will also follow independently  from our main result, see Proposition \ref{lowdim}. 

\begin{definition}\label{caydef}
Let $P_0,\dots,P_r\subset \R^k$ be polytopes. We define 
\[
[P_0*\cdots *P_r]^s:=\conv\{(P_0\times 0)\cup (P_1\times se_1)\cup \cdots \cup(P_r\times se_r)\}\subset \R^k\times \R^r 
\]
where $e_1,\dots,e_r$ is the standard basis for $\R^r$. A polytope $P\subseteq \R^n$ is called a \emph{Cayley polytope of order} $s$ \emph{and length }$r+1$ if there exist some lower dimensional polytopes $P_0,\dots, P_r$ such that $P\cong [P_0*\cdots*P_r]^s$. If $P_0,\dots, P_r$ can be taken to be normally equivalent, i.e. to have the same normal fan $\Sigma$, then $P$ is called a \emph{strict Cayley polytope}  and we write $\Cay^s_\Sigma(P_0,\dots,P_r)$ for $[P_0*\cdots*P_r]^s$.
\begin{figure}[H]
\includegraphics[scale=0.6]{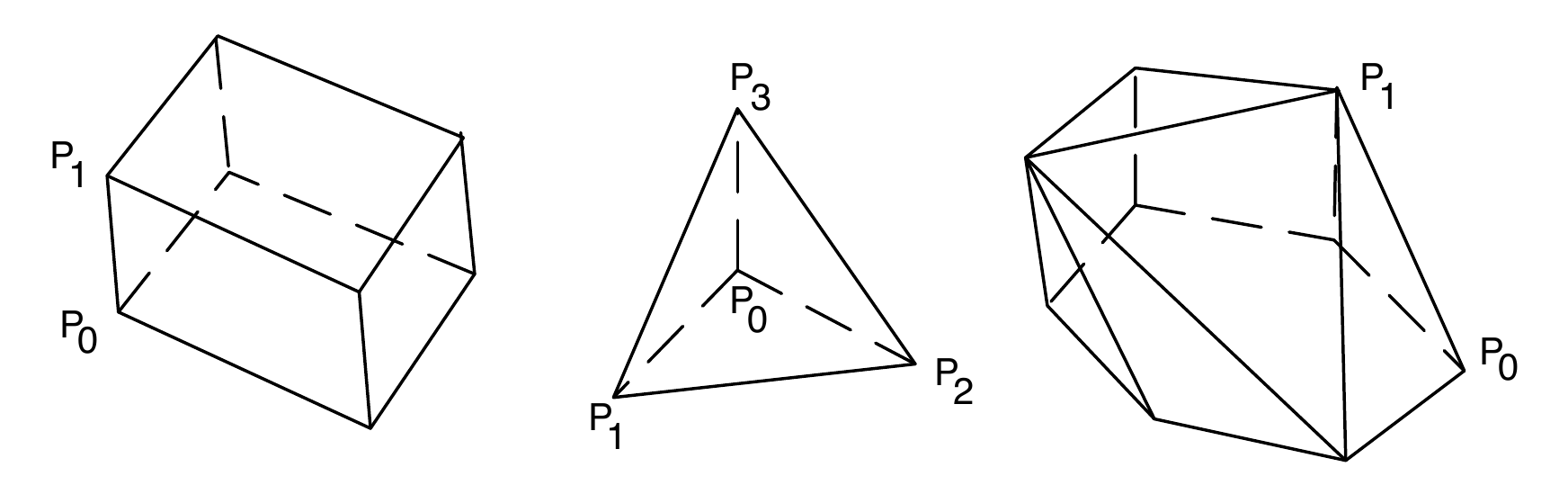}
\caption{Three Cayley polytopes in $\R^3$, two of which are strict.}
\end{figure}
\end{definition}

As previously claimed strict Cayley polytopes correspond to projective fiber bundles. The precise statement is as follows.
\begin{prop}[\cite{Casagrande},\cite{Dickenstein}]\label{cayfiber}
Let $P$ be the smooth polytope associated to a complete embedding of a toric variety $X_P$. Then $P\cong \Cay_\Sigma^s(P_0,\dots,P_k)$ if and only if $X_P$ is a $\pn k$-fiber bundle over $X_\Sigma$.
\end{prop}

The algebro geometric interpretation of having a smooth toric embedding associated to a generalized Cayley polytope $[P_0*\cdots *P_r]^s$ is not as clean as Proposition \ref{cayfiber}. However as noted in the introduction there exist a birational morphism $\pi:X'\to X$, where $X'$ is a projective fiber bundle with fiber $\pn r$. Here $X'=\pn {} (L_0\oplus \cdots\oplus L_r)$, where the $L_i$ are line bundles on the toric variety associated to (the inner-normal fan of) the Minkowski sum $P_0+\cdots+P_r$. Moreover $\pi^*\L|_F=\O_F(s)$ for all fibers $F\cong \pn r$ (see \cite{Dickenstein} for details).

\subsection{Seshadri constants on toric varieties}
Seshadri constants measure the local positivity of a line bundle on a smooth projective variety and as we soon shall see relate both to $k$-jet spannedness and to Cayley polytopes. As a motivation to the definition of Seshadri constants recall the Seshadri condition for ampleness
 \begin{theorem}[\cite{AmpleHartshorne}]\label{crit}
Let $\L$ be a line bundle on a smooth projective variety $X$. Then $\L$ is ample if and only if there exist an $\epsilon\in \R_{>0}$ such that for any point $x\in X$ and for every curve $C$ passing through $x$ it holds that $\L\cdot C\ge \epsilon\cdot  m_x(C)$, where $m_x(C)$ is the multiplicity of $C$ at $x$.
\end{theorem}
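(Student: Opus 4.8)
The plan is to prove the two implications separately: the direction ``ample $\Rightarrow$ Seshadri'' by an elementary degree count, and the converse---the substantive half---through the Nakai--Moishezon criterion applied after a single blow-up. For the forward direction, assume $\L$ is ample and pick $m$ with $\L^{\otimes m}$ very ample, giving an embedding $X\hookrightarrow \pn N$. For any irreducible curve $C$ through $x$, intersecting $C$ with a general hyperplane through $x$ shows that the multiplicity is bounded by the degree, $m_x(C)\le \deg_{\pn N}C=\L^{\otimes m}\cdot C=m(\L\cdot C)$; hence $\epsilon=1/m$ satisfies the required inequality uniformly in $x$ and $C$.

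For the converse, suppose $\L\cdot C\ge \epsilon\, m_x(C)$ holds for some fixed $\epsilon>0$. Since any curve $C$ has $m_x(C)\ge 1$ at each of its points, we get $\L\cdot C\ge \epsilon>0$, so the one-dimensional case of Nakai--Moishezon is immediate; it remains to show $\L^d\cdot V>0$ for every irreducible $V\subseteq X$ of dimension $d\ge 2$. Fix such a $V$, choose any $x\in V$, and blow up: since $X$ is smooth we obtain $\pi:\tilde X\to X$ with smooth exceptional divisor $E\cong \pn{n-1}$, and I let $\tilde V$ be the strict transform of $V$. The key step is to verify that the $\R$-divisor $\pi^*\L-\epsilon E$ is nef on $\tilde X$. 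A curve contained in $E$ meets it positively because $-E|_E=\O_{\pn{n-1}}(1)$ is ample, while for a curve $\tilde C$ not contained in $E$, setting $C=\pi(\tilde C)$ gives $(\pi^*\L-\epsilon E)\cdot\tilde C=\L\cdot C-\epsilon\, m_x(C)\ge 0$ directly from the hypothesis.

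Granting nefness, Kleiman's positivity theorem yields $(\pi^*\L-\epsilon E)^d\cdot\tilde V\ge 0$. Expanding the binomial and observing that every mixed term $(\pi^*\L)^{d-i}E^i\cdot\tilde V$ with $1\le i\le d-1$ vanishes---because the cycle $E^i\cdot\tilde V$ is supported over $x$, where $\pi^*\L$ is numerically trivial---only the two extreme terms survive, so that
\[
(\pi^*\L-\epsilon E)^d\cdot\tilde V=\L^d\cdot V-\epsilon^d m_x(V).
\]
Here $(\pi^*\L)^d\cdot\tilde V=\L^d\cdot V$ by the projection formula, and $E^d\cdot\tilde V=(-1)^{d-1}m_x(V)$. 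Rearranging gives $\L^d\cdot V\ge \epsilon^d m_x(V)\ge \epsilon^d>0$, and Nakai--Moishezon then concludes that $\L$ is ample.

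I expect the main obstacle to be the intersection-theoretic bookkeeping on $\tilde X$: justifying that exactly the two extreme terms of the expansion survive, and evaluating $E^d\cdot\tilde V=(-1)^{d-1}m_x(V)$ via the projectivized tangent cone $\tilde V\cap E\subset \pn{n-1}$, whose degree is the multiplicity $m_x(V)$ and on which $-E$ restricts to the hyperplane class. A secondary point requiring care is the appeal to Kleiman's theorem, which is precisely what converts the curve-by-curve nefness of $\pi^*\L-\epsilon E$ into the nonnegativity of its top self-intersection against $\tilde V$.
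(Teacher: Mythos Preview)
The paper does not prove this statement; it is quoted without proof as the classical Seshadri criterion (attributed to Hartshorne's \emph{Ample Subvarieties of Algebraic Varieties}) and serves only to motivate the definition of Seshadri constants. So there is nothing in the paper to compare your argument against.

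That said, your proof is correct and is essentially the standard one (as in Hartshorne, or Lazarsfeld's \emph{Positivity in Algebraic Geometry I}, Theorem~1.4.13): the forward implication via a very ample multiple and the degree bound $m_x(C)\le\deg C$, and the converse by verifying the Nakai--Moishezon inequalities through the nefness of $\pi^*\L-\epsilon E$ on the one-point blow-up. The vanishing of the mixed terms $(\pi^*\L)^{d-i}E^i\cdot\tilde V$ for $1\le i\le d-1$, the identification $E^d\cdot\tilde V=(-1)^{d-1}m_x(V)$ via the projectivized tangent cone, and the appeal to Kleiman for $(\pi^*\L-\epsilon E)^d\cdot\tilde V\ge 0$ are all handled correctly.
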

It is natural to ask for a lower bound on $\epsilon$ in Theorem \ref{crit}. Such questions lead to the following definition due to Demailly \cite{Demailly}:
\begin{definition}
Let $\L$ be a nef line bundle on a smooth projective variety $X$. The Seshadri constant of $\L$ at a point $x\in X$ is the number
\[
\epsilon(X,\L;x):=\inf_{C\subseteq X} \frac{\L\cdot C}{m_x(C)}.
\]
Here the infimum is taken over all irreducible curves $C$ passing through $x$ and $m_x(C)$ is the  multiplicity of $C$ at $x$.
\end{definition}
We remark that the two versions of Seshadri's condition for ampleness stated as Theorem \ref{crit}  and \ref{seshadriample} are obviously equivalent. In the case of an ample line bundle $\L$, a motivation for studying Seshadri constants is that they measure the jet separation of $|t\L|$ as $t$ increases. This is explained by the following Theorem. Recall that for a polarized variety $(X,\L)$ we let $s(\L,x)$ denote the largest $k\in \N$ such that $\L$ is $k$-jet spanned at $x\in X$.  

\begin{theorem}\cite{Demailly}*{Thm 6.4}\label{Demlimit}
Let $(X,\L)$ be a smooth polarized variety. Then
\[
\epsilon(X,\L;x)=\lim_{t\to \infty} \frac{s(t\L,x)}{t}
\]
for all $x\in X$.
\end{theorem}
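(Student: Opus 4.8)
The plan is to prove the theorem by comparing $\epsilon(X,\L;x)$ with the quantity $\sigma(x):=\sup_{t\ge 1}s(t\L,x)/t$, establishing two opposite inequalities, and separately showing that the limit on the right-hand side exists and equals this supremum. For the existence of the limit I would first observe that the sequence $t\mapsto s(t\L,x)$ is super-additive: if $t_1\L$ is $p$-jet spanned and $t_2\L$ is $q$-jet spanned at $x$, then the multiplication map $H^0(t_1\L)\otimes H^0(t_2\L)\to H^0((t_1+t_2)\L)$ together with the surjection $\O/\m_x^{p+1}\otimes\O/\m_x^{q+1}\to\O/\m_x^{p+q+1}$ shows that $(t_1+t_2)\L$ is $(p+q)$-jet spanned, so $s(t_1\L,x)+s(t_2\L,x)\le s((t_1+t_2)\L,x)$. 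Fekete's lemma then yields $\lim_{t\to\infty}s(t\L,x)/t=\sigma(x)$, a value that will turn out to be finite since it is bounded above by $\epsilon(X,\L;x)$.

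For the inequality $\epsilon(X,\L;x)\ge\sigma(x)$, fix $t$ and set $k=s(t\L,x)$. Since $t\L$ is $k$-jet spanned, I can choose a section $s\in H^0(t\L)$ whose $k$-jet at $x$ is a prescribed nonzero homogeneous element and whose lower jets vanish, so that $m_x(\operatorname{div}(s))=k$ exactly; a dimension count among such sections, using that $t\L$ is very ample, lets me also arrange $C\not\subseteq\operatorname{div}(s)$ for a given irreducible curve $C\ni x$. Then $t\,(\L\cdot C)=\operatorname{div}(s)\cdot C$ is bounded below by the local intersection multiplicity at $x$, which in turn satisfies $m_x(\operatorname{div}(s))\cdot m_x(C)=k\,m_x(C)\le(\operatorname{div}(s)\cdot C)_x$. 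Dividing by $t\,m_x(C)$ and taking the infimum over $C$ gives $\epsilon(X,\L;x)\ge k/t$, hence $\epsilon(X,\L;x)\ge\sigma(x)$.

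The reverse inequality $\sigma(x)\ge\epsilon(X,\L;x)$ is the substantive part, and I would obtain it through the blow-up $\pi:\tilde X\to X$ of $x$ with exceptional divisor $E$, using the standard reformulation $\epsilon(X,\L;x)=\sup\{\lambda>0:\pi^*\L-\lambda E\text{ is nef}\}$. Fix $\lambda<\epsilon(X,\L;x)$. The key translation, via the exact sequence for the thickening $(k+1)E$ and the identities $\pi_*\O_{\tilde X}(-jE)=\m_x^{j}$ and $\pi_*\O_{(k+1)E}=\O_X/\m_x^{k+1}$, is that $t\L$ is $k$-jet spanned at $x$ whenever $H^1(\tilde X,\pi^*(t\L)-(k+1)E)=0$. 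To secure this vanishing I would invoke Kawamata--Viehweg vanishing: setting $D=\pi^*(t\L)-(k+1)E$ and using $K_{\tilde X}=\pi^*K_X+(n-1)E$, one has $D-K_{\tilde X}=\pi^*(t\L-K_X)-(k+n)E$, and $H^1(\tilde X,\O(D))=0$ as soon as this class is nef and big. Taking $k=\lfloor\lambda t\rfloor$ makes $(k+n)/t\to\lambda<\epsilon$, so for all large $t$ the class $\pi^*(t\L-K_X)-(k+n)E$ is nef by the blow-up characterization, the correction by $K_X$ being asymptotically negligible, and it is big because a nef class with positive top self-intersection is big, here $(\pi^*(t\L-K_X)-(k+n)E)^n=(t\L-K_X)^n-(k+n)^n>0$ for large $t$ since $\lambda<\epsilon\le(\L^n)^{1/n}$. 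Thus $s(t\L,x)\ge k\ge\lambda t-1$ for large $t$, giving $\sigma(x)\ge\lambda$, and letting $\lambda\uparrow\epsilon(X,\L;x)$ completes the direction.

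The main obstacle is this last direction, and within it the careful bookkeeping needed to pass from cohomology vanishing to jet separation and to verify the asymptotic nefness and bigness of $\pi^*(t\L-K_X)-(k+n)E$: one must control the canonical correction $K_X$ uniformly in $t$, confirm that the blow-up nef threshold genuinely governs these twisted classes, and check that $H^1=0$ combined with the surjection onto sections over the thickened exceptional divisor produces full $k$-jet spannedness rather than a weaker separation. By contrast, the super-additivity and the inequality $\epsilon\ge\sigma$ are essentially formal once the local estimate $m_x(\operatorname{div}(s))\cdot m_x(C)\le(\operatorname{div}(s)\cdot C)_x$ is in hand.
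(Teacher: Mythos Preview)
The paper does not supply its own proof of this statement: it is quoted as \cite{Demailly}*{Thm 6.4} and used as a black box, so there is nothing in the paper to compare your argument against.

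That said, your outline is correct and is essentially the standard proof one finds in Demailly's paper and in Lazarsfeld's \emph{Positivity in Algebraic Geometry I}. The three ingredients---super-additivity of $t\mapsto s(t\L,x)$ via the multiplication map and Fekete's lemma, the elementary inequality $\epsilon\ge s(t\L,x)/t$ from the local estimate $m_x(\operatorname{div} s)\cdot m_x(C)\le(\operatorname{div} s\cdot C)_x$, and the reverse inequality via Kawamata--Viehweg vanishing on the blow-up---are exactly the classical ones. Two small remarks: for the choice of $s$ with $C\not\subseteq\operatorname{div}(s)$ you do not actually need very ampleness, since prescribing the $k$-jet to be a degree-$k$ form not vanishing on the tangent cone of $C$ already forces $C\not\subseteq\operatorname{div}(s)$ by irreducibility; and in the vanishing step you should make explicit that $\epsilon(t\L-K_X;x)\ge(t-a)\epsilon(\L;x)$ for $a$ large enough that $a\L-K_X$ is ample, so that $(k+n)\le\epsilon(t\L-K_X;x)$ really does hold for $t\gg0$, rather than appealing loosely to the correction being ``asymptotically negligible''. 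With those clarifications the argument is complete.
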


For a toric variety $X$ we have the following Proposition due to Di Rocco which for a fixpoint $x\in X$ relate $\epsilon(X,\L;x)$ and $s(\L,x)$ more directly then Theorem \ref{Demlimit},
\begin{prop}[\cite{primer}*{4.2.2}]\label{epstok}
Let $(X,\L)$ be a smooth polarized toric variety and $x\in X$ be a torus fixpoint, then $\epsilon(X,\L;x)=s(\L,x)$.
\end{prop}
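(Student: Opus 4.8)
The plan is to show that both quantities compute the same combinatorial invariant, namely the lattice length $\ell$ of the shortest edge through the vertex $v\in P_\L$ corresponding to the fixpoint $x$. The identification of $s(\L,x)$ with $\ell$ is immediate from the classification input: by Proposition \ref{bigprop}(1), $\L$ is $k$-jet spanned at $x$ precisely when every edge through $v$ has lattice length at least $k$, so the largest such $k$ is exactly $\ell=\min_e \ell_e$, where $\ell_e$ denotes the lattice length of the edge $e$ incident to $v$.

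For the Seshadri constant, rather than estimating $\L\cdot C/m_x(C)$ over all curves directly, I would exploit the limit formula of Theorem \ref{Demlimit}, namely $\epsilon(X,\L;x)=\lim_{t\to\infty} s(t\L,x)/t$, together with the scaling behaviour of polytopes. The polytope associated to $t\L$ is $tP_\L$, and the fixpoint $x$ corresponds to the vertex $tv$. Since $X$ is smooth, each edge through $v$ has a primitive lattice direction $u_e$ and runs from $v$ to $v+\ell_e u_e$; under the dilation by $t$ this becomes the edge from $tv$ to $tv+(t\ell_e)u_e$, whose lattice length is $t\ell_e$ because $u_e$ remains primitive. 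Applying Proposition \ref{bigprop}(1) to the smooth polarized toric variety $(X,t\L)$ therefore gives $s(t\L,x)=\min_e t\ell_e=t\ell$. Substituting into the limit formula yields $\epsilon(X,\L;x)=\lim_{t\to\infty} t\ell/t=\ell$, and combining with the first step completes the proof.

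The only genuinely delicate point in this route is justifying that $P_{t\L}=tP_\L$ as smooth lattice polytopes and that the vertex–edge combinatorics at $v$ is preserved under dilation; this is routine but must be stated, since Proposition \ref{bigprop}(1) is phrased in terms of lattice lengths and one must be sure these scale by exactly $t$, which relies on each $u_e$ staying primitive. An alternative, more hands-on approach would bound $\epsilon(X,\L;x)$ directly: the torus-invariant curve $C_e$ associated to each edge $e$ through $v$ is a smooth rational curve through $x$ with $m_x(C_e)=1$ and $\L\cdot C_e=\ell_e$, giving the upper bound $\epsilon(X,\L;x)\le \ell$ at once. The hard part of that approach, and the reason I prefer the limit argument, is the matching lower bound $\L\cdot C\ge \ell\, m_x(C)$ for \emph{all} irreducible curves $C$ through $x$; establishing this would require the nefness characterisation $\epsilon(X,\L;x)=\sup\{\lambda:\pi^*\L-\lambda E\ \text{nef}\}$ on the blow-up $\pi$ of $x$, together with the observation recorded in the background section that blowing up a fixpoint truncates the vertex $v$, so that $\pi^*\L-\lambda E$ remains nef exactly while $\lambda\le\ell$.
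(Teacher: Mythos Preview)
The paper does not give a proof of Proposition~\ref{epstok}; it is quoted from \cite{primer}*{4.2.2}, and the underlying fact that the Seshadri constant at a fixpoint equals the shortest edge length through the corresponding vertex is attributed in the introduction to \cite{Sandra}. There is therefore no in-paper argument to compare your proposal against.

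That said, your argument is correct. The step $s(\L,x)=\ell$ is exactly Proposition~\ref{bigprop}(1), and the computation of $\epsilon(X,\L;x)$ via Theorem~\ref{Demlimit} and the dilation $P_{t\L}=tP_\L$ is valid: since $P_\L$ is smooth the edge directions at $v$ form a lattice basis, hence are primitive, so the lattice lengths at $tv$ are exactly $t\ell_e$ and $s(t\L,x)=t\ell$ follows again from Proposition~\ref{bigprop}(1). Nothing circular is happening, as Theorem~\ref{Demlimit} is a general result of Demailly, not a toric statement.

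The alternative route you sketch in your final paragraph---the upper bound $\epsilon(X,\L;x)\le\ell$ from the invariant curves $C_e$, and the matching lower bound from nefness of $\pi^*\L-\lambda E$ on the blow-up of $x$, using that this blow-up truncates the vertex $v$---is essentially the argument actually carried out in \cite{Sandra} and \cite{primer}. Your limit approach is shorter once Theorem~\ref{Demlimit} is available, but it conceals which curves realise the infimum; the direct approach makes that geometry explicit and does not rely on the asymptotic formula.
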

Corollary \ref{equivTHM} provide a new connection between Seshadri constants and $k$-jet spannedness, which we will prove in the final section of this paper.

\subsection{Combinatorial bounds for Seshadri constants}
In \cite{degen} Ito gives lower and upper bounds for $\epsilon(X_P,\L_P;x)$ at a point in any orbit of the torus action. In particular he deals with general points since these lie in the big orbit. For the general point these bounds can be expressed in terms of two combinatorial invariants $s_1(P)$ and $s_2(P)$. In order to define $s_1(P)$ and $s_2(P)$ we need the following setup:
Let $P$ be a rational polytope in $M_\R$ and let 
\[
\phi:=\{\pi:M_\R\to (\Z)_\R: \pi \text{ is a lattice projection } \}. 
\]
Then for a fixed $\pi \in \phi$ we can choose a decomposition $M= \ker \pi|_M \oplus \Z$, and get the following short exact sequence of free abelian groups:
\[
\xymatrix{ 
0\ar[r] &\ker \pi|_M \ar[r] &M \ar[r]^{\pi |_M} & \Z \ar[r] &0}.
\]
\begin{definition}
Let $\pi$ and $\phi$ be as above. If the rank of $M$ is 1, we define $s_1(P):=|P|_M$ and inductively if the rank of $M$ is $r$, then 
\[
s_1(P):=s_1^M(P):=\sup_{\pi \in \phi} \min\{|\pi(P)|_\Z,\sup_{u\in \Q}s_1^{\ker \pi|_M}(\pi^{-1}(u)\cap P)\}.
\]
Moreover let
\[
s_2(P):=\inf_{\pi\in \phi}|\pi(P)|_\Z.
\]
\end{definition}
\begin{figure}[H]
\includegraphics[scale=0.7]{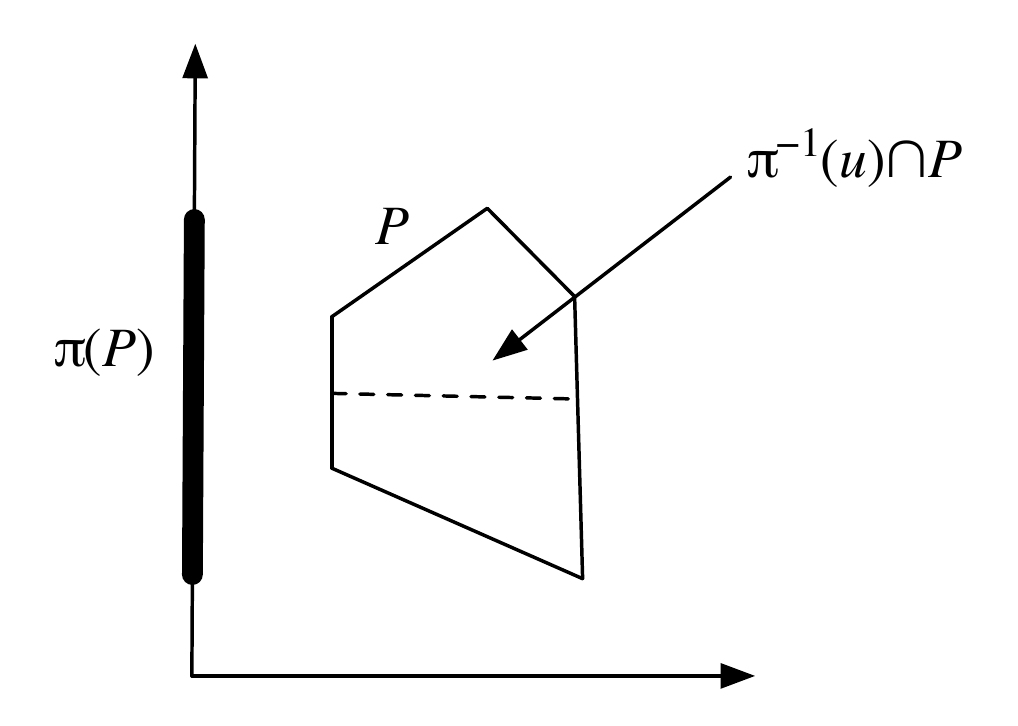}
\end{figure}

We note that $s_2(P)$ is the lattice width of $P$. It is also easy to see that $s_1(P)$ and $s_2(P)$ are well-defined (i.e. do not depend on the choice of a decomposition $M=\ker\pi_M\oplus \Z$) and are invariant under lattice isomorphisms (see \cite{degen}). 

\begin{theorem}[\cite{degen}]
Let $P\subset M_\R$ be a rational convex $n$-dimensional polytope. At the general point $x\in X_P$, it holds that
\[
s_1(P)\le \epsilon(X_P,\L_P;x)\le s_2(P).
\] 
\end{theorem}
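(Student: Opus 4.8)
The plan is to prove the two inequalities by quite different means: the upper bound comes directly from a one‑parameter subgroup, while the lower bound I would reduce to a statement about polynomial interpolation on the lattice points of the dilates of $P$ and then attack by induction on $\dim P$.

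\emph{Upper bound.} For a lattice projection $\pi\in\phi$ let $v_\pi\in N$ be the primitive vector with $\pi=\langle\,\cdot\,,v_\pi\rangle$, and let $C_\pi:=\overline{\lambda_{v_\pi}(\C^*)\cdot x}$ be the closure of the orbit of the associated one‑parameter subgroup through the general (torus) point $x$. Since $v_\pi$ is primitive, $\lambda_{v_\pi}$ is injective, so $C_\pi$ is smooth at $x$ and $m_x(C_\pi)=1$. Using the description \ref{emb} of $\varphi$, the $i$‑th homogeneous coordinate scales like $t^{\langle m_i,v_\pi\rangle}$ along $C_\pi$, so pulling $\O(1)$ back to the normalisation $\pn1$ of $C_\pi$ has degree $\max_{m\in P}\langle m,v_\pi\rangle-\min_{m\in P}\langle m,v_\pi\rangle=|\pi(P)|$. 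Hence $\epsilon(X_P,\L_P;x)\le \L_P\cdot C_\pi/m_x(C_\pi)=|\pi(P)|$, and taking the infimum over $\pi\in\phi$ gives $\epsilon(X_P,\L_P;x)\le s_2(P)$.

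\emph{Reformulation for the lower bound.} Combining Theorem \ref{Demlimit} with the toric jet criterion turns the problem into combinatorics. By Theorem \ref{Demlimit}, $\epsilon(X_P,\L_P;x)=\lim_{t\to\infty}s(t\L_P,x)/t$ at the general point, and $t\L_P$ corresponds to the dilate $tP$. By Remark \ref{fitideal}(1) one may evaluate the jet matrix at $(1,\dots,1)$, where a Leibniz computation (the one underlying Proposition \ref{bigprop}(3)) shows that a constant‑coefficient operator of order $\le k$ annihilates every jet $j^k_{(1,\dots,1)}(x^m)$, $m\in Q\cap M$, exactly when the corresponding degree‑$\le k$ polynomial (read off via falling factorials, which form a basis of the degree‑$\le k$ polynomials) vanishes on $Q\cap M$. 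Therefore, for any lattice polytope $Q$,
\[
s(\L_Q,x)_{\mathrm{gen}}=\delta(Q):=\max\{k:\ \text{no nonzero polynomial of degree}\le k\ \text{vanishes on}\ Q\cap M\}.
\]
It then suffices to prove $\lim_t\delta(tP)/t\ge s_1(P)$; the same picture re‑proves the upper bound, since $\prod_c(\langle\,\cdot\,,v_\pi\rangle-c)$ over the $t|\pi(P)|+1$ integer values of $\pi$ on $tP$ vanishes on $tP\cap M$, whence $\delta(tP)\le t|\pi(P)|$.

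\emph{Lower bound and the main obstacle.} I would induct on $n=\dim P$, the case $n=1$ being $\delta([0,d])=d=s_1$. For the step, fix $\pi\in\phi$ nearly realising $s_1(P)=\min\{|\pi(P)|,S\}$, where $S=\sup_u s_1^{\ker\pi|_M}(\pi^{-1}(u)\cap P)$, and split $M=\ker\pi|_M\oplus\Z$. Given a nonzero $g$ of degree $d$ vanishing on $tP\cap M$, its restriction to each integer level $u$ is an $(n-1)$‑variable polynomial of degree $\le d$ vanishing on the lattice points of the slice $\pi^{-1}(u)\cap tP$, a $t$‑dilate of a slice of $P$; by induction $g$ vanishes identically on that slice once $d$ drops below $t\,s_1^{\ker\pi|_M}(\cdot)-o(t)$, and if $g$ vanishes on more than $d$ of the $t|\pi(P)|+1$ levels, viewing $g$ as a polynomial in the $\pi$‑coordinate forces $g\equiv0$. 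The hard part is precisely here: the slices $\pi^{-1}(u)\cap P$ vary with $u$, so their invariants change, and one must guarantee that \emph{enough} levels carry slice‑invariant close to the single supremum $S$ to outnumber $d$ — reconciling the $\sup_u$ in the definition of $s_1$ with the many‑level count the induction demands. I expect the clean way to organise this is a toric degeneration straightening the varying slices into a genuine toric fibration over $\pn1$ with normally‑equivalent fibers (as in Proposition \ref{cayfiber}), making the dichotomy ``the curve dominates the base'' versus ``the curve lies in a fiber'' literal; controlling $\delta$ (equivalently the Seshadri constant) across this degeneration is the technical heart of the proof.
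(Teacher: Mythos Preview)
The paper does not prove this theorem; it is quoted from Ito \cite{degen} and stated without proof, so there is no in-paper argument to compare against.

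On the merits of your proposal: the upper bound is correct and is exactly the standard argument via one-parameter subgroups. For the lower bound, two issues. First, a technical mismatch: you invoke Theorem \ref{Demlimit}, which in this paper is stated for a \emph{smooth} polarized variety, while the statement in question is for an arbitrary rational polytope $P$, so $X_P$ may be singular and $\L_P$ may only be a $\Q$-line bundle; the jet reformulation via $\delta(Q)$ and Remark \ref{fitideal} likewise sits inside the smooth setup. You would need to justify passing to a resolution or otherwise extend those tools before the reduction to interpolation is valid in this generality.

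Second, and more substantively, you yourself flag the real gap: in the inductive step the invariant $s_1^{\ker\pi|_M}(\pi^{-1}(u)\cap P)$ depends on $u$, and the definition of $s_1$ only guarantees one slice with value close to $S$, whereas your counting argument needs many. Your closing guess that a toric degeneration straightening the fibration is the right device is exactly on target (and is what the title of \cite{degen} advertises), but as written you have outlined the obstacle rather than overcome it. Until the degeneration and the semicontinuity of the Seshadri constant (or of $\delta$) across it are actually carried out, the lower bound remains unproved.
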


We will compute the Seshadri constant at a general point in a few examples using the following lemma.
\begin{lemma}\label{sicomp}
Let $P$ and $Q$ be $n$-dimensional polytopes in $M_\R$
\begin{enumerate}
\item{If $Q\subseteq P$ then $s_1(Q)\le s_1(P)$ and $s_2(Q)\le s_2(P)$}
\item{Let $Q=\conv\{p+c_1^+\hat{e}_1,p-c_1^-\hat{e}_1,p+c_2^+\hat{e}_2,p-c_2^-\hat{e}_2,\dots, p+c_n^+\hat{e}_n,p-c_n^-\hat{e}_n\}$, where $p\in M_\R$, $c_1^\pm,\dots,c_n^\pm\in \R^+$ and $\hat{e}_1,\dots,\hat{e}_n$ is a basis for $M$. Then \[s_1(Q)=\epsilon(X_Q,\L_Q,x)=s_2(Q)=\min\{(c_1^+-c_1^-),\dots,(c_n^+-c_n^-)\},\] where $x\in X_Q$ is a general point.}
\end{enumerate}
\end{lemma}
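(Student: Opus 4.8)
For part (1), the plan is to prove both monotonicity statements directly from the definitions. For $s_2$, recall that $s_2(P) = \inf_{\pi \in \phi} |\pi(P)|_\Z$ is the lattice width. Since $Q \subseteq P$ implies $\pi(Q) \subseteq \pi(P)$ for every lattice projection $\pi$, we get $|\pi(Q)|_\Z \le |\pi(P)|_\Z$ for each $\pi$, and taking infima preserves the inequality, giving $s_2(Q) \le s_2(P)$. For $s_1$ I would argue by induction on the rank $r$ of $M$. The base case $r=1$ is immediate since $|Q|_M \le |P|_M$ when $Q \subseteq P$. For the inductive step, fix any $\pi \in \phi$: the inner quantity $\min\{|\pi(Q)|_\Z, \sup_{u}s_1^{\ker\pi|_M}(\pi^{-1}(u)\cap Q)\}$ is bounded above by the corresponding quantity for $P$, because $\pi(Q)\subseteq \pi(P)$ handles the first term and $\pi^{-1}(u)\cap Q \subseteq \pi^{-1}(u)\cap P$ together with the inductive hypothesis handles the second; taking the supremum over $\pi$ then yields $s_1(Q) \le s_1(P)$.

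For part (2), the strategy is to sandwich the general-point Seshadri constant between $s_1(Q)$ and $s_2(Q)$ using the theorem of Ito quoted just above, and to compute both combinatorial invariants explicitly for the special polytope $Q$, which is a ``generalized cross-polytope'' (a convex hull of points along coordinate axes from a center $p$). Write $w := \min\{(c_1^+ - c_1^-),\dots,(c_n^+ - c_n^-)\}$ for the claimed common value. I would first compute $s_2(Q)$: for the coordinate projection $\pi_i$ onto the $i$-th factor, the image $\pi_i(Q)$ is the interval of lattice length $c_i^+ - c_i^-$ (the vertices $p \pm c_j^\pm \hat e_j$ for $j \ne i$ project to a single point, namely $\pi_i(p)$, while $p + c_i^+\hat e_i$ and $p - c_i^-\hat e_i$ give the endpoints), so $|\pi_i(Q)|_\Z = c_i^+ - c_i^-$ and taking the minimum over these coordinate projections already gives $s_2(Q) \le w$; the reverse inequality $s_2(Q) \ge w$ should follow by checking that no other lattice projection produces a smaller width, which for this cross-polytope reduces to the observation that any projection direction is a combination of the coordinate directions. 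Then I would compute $s_1(Q)$ and show it equals $w$ as well, using the coordinate projection achieving the minimum and the inductive structure of $s_1$. Since $s_1(Q) = s_2(Q) = w$, the sandwich $s_1(Q) \le \epsilon(X_Q,\L_Q;x) \le s_2(Q)$ forces $\epsilon(X_Q,\L_Q;x) = w$ at the general point.

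I expect the main obstacle to be the lower bound $s_2(Q) \ge w$ (equivalently, verifying that the lattice width is attained by a coordinate projection rather than some oblique one) and the matching computation $s_1(Q) \ge w$. The upper bounds are easy because exhibiting one favorable projection suffices, but the lower bounds require controlling all projections $\pi \in \phi$ simultaneously. For the cross-polytope this should be tractable: for a projection $\pi$ with primitive dual vector $n = \sum_i b_i \hat e_i^*$, the image $\pi(Q)$ has width determined by $\max_i |b_i| c_i^+ + \max_i|b_i|c_i^-$-type expressions evaluated at the extreme vertices, and one checks this is minimized when $n$ is a single coordinate vector. I would make this precise by evaluating $\langle n, v - p\rangle$ over the $2n$ vertices $v$ and comparing the resulting spread against $w$, being careful that the definition of $s_1$ (with its nested $\sup$–$\min$ structure) genuinely collapses to the same value for this symmetric configuration.
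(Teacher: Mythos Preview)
Your proposal is correct and matches the paper's approach: the paper's proof consists of the single sentence that the lemma ``is easily proved using induction on dimension'' and that part~(1) already appears in \cite{degen}, which is precisely the induction you outline. The obstacle you anticipate for $s_2(Q)\ge w$ dissolves quickly: for any primitive direction $n$ with $n_j\ne 0$, the two vertices $p+c_j^+\hat e_j$ and $p-c_j^-\hat e_j$ alone give width $(c_j^++c_j^-)\,|n_j|\ge c_j^++c_j^-\ge w$, so coordinate projections indeed realise the minimum and the same observation feeds the inductive computation of $s_1$.
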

\begin{proof}
This is easily proved using induction on dimension and part \emph{(1)} appears in \cite{degen}.
\end{proof}
Note that standard simplices, boxes and cross-polytopes are special cases of part \emph{(2)} in Lemma \ref{sicomp} 
\begin{example}\label{delPezzo}
The following polygon $P$ corresponds to a closed embedding of the Del Pezzo surface $X$ of degree 6 in $\pn 6$.
\begin{figure}[H]
\includegraphics[scale=0.4]{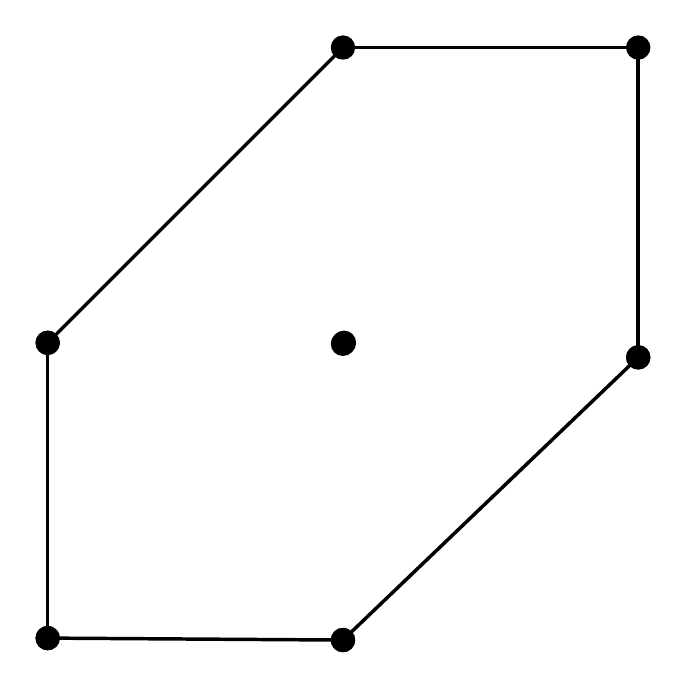}
\caption{A Del Pezzo surface in $\pn 6$}
\end{figure}
Here $\pi:X\to \pn 2$ is the blow-up of $\pn 2$ in 3 points embedded by $-K_X=\pi^*\O_{\pn 2}(3)-E_1-E_2-E_3$ where $E_1,E_2$ and $E_3$ are the exceptional divisors. The polygon corresponds to the restriction of this embedding to an affine chart. Projecting onto a coordinate axis we see that $s_2(P)\le 2$. On the other hand $P$ contains the polytope $Q=\conv\{(1,1)\pm \hat{e}_1,(1,1)\pm \hat{e}_2)\}$. Thus $s_1(P)\ge 2$ and $s_2(P)\ge 2$ by Lemma \ref{sicomp}. Hence $s_1(P)=s_2(P)=\epsilon(X_P,\L_P;x)=2$ at a general point. 
\end{example}

Note that in Example \ref{delPezzo} $\epsilon(X_P,\L_P;x(v))=1$ at every fixpoint $x(v)$ by Proposition \ref{epstok}, since every edge of $P$ has length 1.

\begin{example}\label{cayseps}
Let $P=[P_0*\cdots P_r]^k\subset \R^s\times \R^r$ and assume that every edge of $P$ has length at least $k$. Note that $P$ contains the standard simplex $k\Delta_{s+r}$, so by Lemma \ref{sicomp} we have $s_1([P_0*\cdots *P_r]^k)\ge k$. On the other hand if we consider a projection onto any coordinate axis in $\R^r$ we see that $s_2([P_0*\cdots *P_r]^k)\le k$, hence \[
s_1([P_0*\cdots *P_r]^k)=\epsilon(X_{[P_0*\cdots *P_r]^k},\L_{[P_0*\cdots *P_r]^k};x)=s_2([P_0*\cdots *P_r]^s)=k.
\]
for a general point $x\in X_P$.
\end{example}

In particular Example \ref{cayseps} tell us that if $P\cong[P_0*P_1]^1$, then $\epsilon(X,\L;x)=1$ at the general point. The following theorem due to Ito gives the converse. 

\begin{theorem}[\cite{algebro}]\label{itochar}
Let $P\subset M_\R$ be a full dimensional polytope. Then the following are equivalent
\begin{enumerate}
\item{$P\cong [P_0*P_1]^1$} 
\item{$\epsilon(X_P,\L_P,x)=1$ at a very general point $x\in X_P$.}
\item{The polarized variety $(X_P,\L_P)$ is covered by lines, i.e. for every $x\in X_p$ there exist a subvariety $Z\subset X$ such that $x\in Z$ and  $(Z,\L_P|_Z)\cong (\pn 1,\O(1))$.}
\end{enumerate}
\end{theorem}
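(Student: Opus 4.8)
The plan is to prove the cycle of implications $(1)\Rightarrow(2)\Rightarrow(3)\Rightarrow(1)$, anchoring everything on the elementary combinatorial observation that for a full-dimensional lattice polytope the condition $P\cong[P_0*P_1]^1$ is \emph{equivalent} to having lattice width $s_2(P)=1$. Indeed, $[P_0*P_1]^1\subset\R^k\times\R$ projects onto $[0,1]$ in the last coordinate, so its lattice width is at most $1$, and it is exactly $1$ since $P$ is full-dimensional; conversely, if $s_2(P)=1$ there is a primitive $\ell\in N$ with $\max_P\ell-\min_P\ell=1$, and after translation every vertex of $P$, being a lattice point, satisfies $\ell=0$ or $\ell=1$, so setting $P_0:=P\cap\{\ell=0\}$ and $P_1:=P\cap\{\ell=1\}$ gives $P=\conv((P_0\times 0)\cup(P_1\times 1))=[P_0*P_1]^1$. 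I will call this the \emph{width lemma} and use it freely; it reduces condition (1) to the statement $s_2(P)=1$.

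For $(1)\Rightarrow(2)$: if $P\cong[P_0*P_1]^1$ then by the width lemma $s_2(P)=1$, so Example \ref{cayseps} (with $k=1$, $r=1$) gives $\epsilon(X_P,\L_P;x)=1$ at the general point of the open torus orbit. Since the torus acts transitively on that orbit, the Seshadri constant is the same at every point of the orbit, hence in particular at a very general point, giving (2).

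For $(2)\Rightarrow(3)$ — the main obstacle — I must manufacture an actual covering family of lines out of the numerical equality $\epsilon=1$. First note that since $\L_P$ is very ample the embedding $X_P\hookrightarrow\pn N$ makes $\L_P\cdot C=\deg C\ge m_x(C)$ for every curve $C$ through $x$, so $\epsilon(X_P,\L_P;x)\ge 1$ always; thus $\epsilon=1$ forces the infimum defining the Seshadri constant to attain its least possible value. The key is then to show that at a very general $x$ this infimum is computed by a family of curves of $\L_P$-degree $1$. One considers, for $x$ outside countably many subvarieties, the curves realizing ratios approaching $1$, organizes them into an algebraic family, and argues that a minimal-degree irreducible member $C$ realizes the infimum, i.e. $\L_P\cdot C=m_x(C)$; projecting $C$ from $x$ then has image of degree $\deg C-m_x(C)=0$, so $C$ is a line and $(C,\L_P|_C)\cong(\pn 1,\O(1))$. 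As $x$ varies this family covers $X_P$, giving (3). The delicate points are the existence and boundedness of such a Seshadri family and the reduction to degree $1$, which is exactly where the "very general" hypothesis and a semicontinuity argument enter.

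For $(3)\Rightarrow(1)$: given a covering family of lines, all its members share a single numerical class $\beta\in N_1(X_P)$ with $\L_P\cdot\beta=1$. Degenerating a general member to a torus-invariant curve by a one-parameter subgroup limit exhibits $\beta$ as the class of a torus-invariant line, i.e. of an edge of $P$ of lattice length $1$, and the associated extremal contraction is a toric fibration $\phi:X_P\to Y$ whose general fibres are these lines. Dually this contraction is a lattice projection, and the transverse primitive functional $\ell\in N$ measures $P$ across the fibres; since each fibre is a line of $\L_P$-degree $1$, the extent $\max_P\ell-\min_P\ell$ equals $1$, so $s_2(P)=1$ and the width lemma yields $P\cong[P_0*P_1]^1$, closing the cycle. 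I expect the genuine difficulty to be concentrated entirely in $(2)\Rightarrow(3)$, as $(1)\Rightarrow(2)$ and $(3)\Rightarrow(1)$ are essentially combinatorial once the width lemma and the toric degeneration of curves are available.
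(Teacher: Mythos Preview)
The paper does not prove this theorem at all: it is stated as a result of Ito with the citation \cite{algebro} and no proof is given. So there is nothing in the present paper to compare your argument against; the author simply quotes Ito's characterisation and then uses it (and Example~\ref{cayseps}) as motivation for Corollary~\ref{equivTHM}.

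Since you asked for feedback regardless, let me point out where your sketch is solid and where it is not. The width lemma and the implication $(1)\Rightarrow(2)$ are fine, and indeed the paper's Example~\ref{cayseps} already furnishes $(1)\Rightarrow(2)$ for $k=1$. Your argument that an irreducible curve with $\deg C = m_x(C)$ must be a line is also correct (the projectivised tangent cone has degree $m_x(C)=\deg C$ and is contained in $C$, hence equals $C$).

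The genuine gaps are the ones you flag yourself. In $(2)\Rightarrow(3)$ you have not shown that the infimum defining $\epsilon=1$ is \emph{attained} by some curve at a very general point, nor that among attaining curves you can find one of degree $1$ rather than, say, $\L_P\cdot C=m_x(C)=2$; the boundedness and semicontinuity argument you allude to is exactly the substance of Ito's proof and cannot be waved away. In $(3)\Rightarrow(1)$, degenerating a single line in the covering family to a torus-invariant curve yields an edge of lattice length $1$, but that is strictly weaker than $s_2(P)=1$: many smooth polytopes have short edges without having lattice width $1$. Your passage from ``covering family of lines'' to ``extremal contraction which is a fibration'' is unjustified---a covering family need not span an extremal ray, and even when it does the contraction need not be a fibration. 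What one actually needs is to show that the lines through a general torus point, translated by the torus, yield a projection of $M$ whose image of $P$ has length $1$; this requires analysing how the family interacts with the torus action, not just a single degeneration.
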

Recall that $x\in X$ is a \emph{very general point} if $x$ lies in the complement of a countable union of Zariski closed proper subsets of $X$. Thus a very general point is a general point but the converse does not necessarily hold. Note that Example \ref{delPezzo} is an example of a polytope which is not Cayley but is such that $\epsilon(X_P,\L_P;x)=2$ at a general point. Hence a direct generalization of Ito's characterization involving Cayley polytopes of higher order is false. However Example \ref{cayseps} tells us that if  $P\cong [P_0*P_1]^k$ and every edge of $P$ has length at least $k$, then $\epsilon(X_P,\L_P;x)=k$ at the general point. Corollary \ref{equivTHM} shows that the converse hold and that the assumption is equivalent to requiring that $\epsilon(X_P,\L_P;x)=k$ at every point.

\section{Osculating spaces and Cayley polytopes }
In this section we will prove Theorem \ref{theTHM}. To this end we first need to prove a few technical lemmas.

\begin{lemma}\label{dilation}
Assume that $P$ is a smooth lattice polytope with every edge of the same length $k$. Then $\frac{1}{k}P$ is also a smooth lattice polytope with every edge of length 1. 
\end{lemma}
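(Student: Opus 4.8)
The plan is to dilate vertex-by-vertex, using the connectivity of the edge graph to control lattice membership. First I would observe that the combinatorial structure of $P$ is unchanged by the dilation $x\mapsto \frac{1}{k}x$: the vertices of $\frac{1}{k}P$ are exactly the points $\frac{1}{k}v$ with $v$ a vertex of $P$, and $\frac{1}{k}v,\frac{1}{k}w$ span an edge of $\frac{1}{k}P$ precisely when $v,w$ span an edge of $P$. If $v,w$ are adjacent vertices of $P$, then $w-v$ is a lattice vector of lattice length $k$, so $w-v=k\,u_{vw}$ for a unique primitive vector $u_{vw}\in M$; hence the corresponding edge vector of $\frac{1}{k}P$ is $\frac{1}{k}(w-v)=u_{vw}$, which is primitive. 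This already shows that every edge of $\frac{1}{k}P$ has lattice length $1$ and that the primitive edge directions at $\frac{1}{k}v$ coincide with those at $v$.

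The main point to establish is that $\frac{1}{k}P$ is a \emph{lattice} polytope, i.e. that $\frac{1}{k}v\in M$ for every vertex $v$; this is the only place where dilating a lattice point might leave the lattice, and so is the crux of the argument. Here I would fix one vertex $v_0$ of $P$ and translate $P$ by the lattice vector $-v_0$ (a translation that preserves lattice polytopes, smoothness, and all edge lengths), so that $v_0=0$. Given an arbitrary vertex $w$, I would use the fact that the $1$-skeleton (edge graph) of a polytope is connected to choose a path $v_0=w_0,w_1,\dots,w_m=w$ along edges of $P$. Each step satisfies $w_{i+1}-w_i=k\,u_i\in kM$, so the telescoping sum $w-v_0=\sum_i (w_{i+1}-w_i)$ lies in $kM$, whence $\frac{1}{k}w=\frac{1}{k}(w-v_0)\in M$. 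Thus every vertex of $\frac{1}{k}P$ lies in $M$, and $\frac{1}{k}P$ is a lattice polytope.

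Finally, smoothness transfers for free: at each vertex $\frac{1}{k}v$ the primitive edge directions are exactly the primitive edge directions at $v$, which form a basis of $M$ since $P$ is smooth, so $\frac{1}{k}P$ is smooth. Combining the three observations---lattice vertices, unit edge lengths, and smoothness---gives the claim. The only genuinely nontrivial input is the connectivity of the edge graph, which lets the single normalization $v_0=0$ propagate to every vertex; everything else is a direct consequence of the fact that ``lattice length $k$'' means precisely ``edge vector equals $k$ times a primitive lattice vector.''
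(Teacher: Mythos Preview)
Your proof is correct and takes a genuinely different route from the paper. The paper argues algebro-geometrically: since $P$ and $\frac{1}{k}P$ have the same normal fan, they define line bundles $\L_P$ and $\L_{(1/k)P}=\frac{1}{k}\L_P$ on the same smooth toric variety $X$; the hypothesis on edge lengths translates into $\L_P\cdot C=k$ for every torus-invariant curve $C$, hence $\L_{(1/k)P}\cdot C=1$, so $\L_{(1/k)P}$ restricts to $\O_{\pn 1}(1)$ on each such curve, and one concludes that $\L_{(1/k)P}$ is an honest (integral) line bundle whose polytope is a smooth lattice polytope with unit edges. Your argument, by contrast, is purely combinatorial: you normalize one vertex to the origin and propagate lattice membership along the edge graph, using only that an edge of lattice length $k$ is $k$ times a primitive vector. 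Your approach is more elementary and entirely self-contained, requiring no toric dictionary or intersection theory; the paper's approach is shorter to state but implicitly leans on the fact that on a smooth complete toric variety a $\Q$-line bundle with integral degree on every invariant curve is integral (equivalently, that $\Pic(X)$ is torsion-free). Both proofs, incidentally, establish the conclusion only up to a lattice translation of $\frac{1}{k}P$, which is all that is needed for the application in the paper.
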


\begin{proof}
It is clear that $P$ and $\frac{1}{k}P$ are normally equivalent since the edge-direction at every vertex coincides. 
Let $(X,\L_P)$ be the smooth polarized toric variety corresponding to $P$ and $(X,\L_{(1/k)P})$ be the smooth $\Q$-polarized toric variety corresponding to $\frac{1}{k}P$. That every edge of $P$ has length exactly $k$ means that $\L_P\cdot C=k$ for every torus-invariant curve $C$ of $X$. Thus for all such $C$ we have $\L_{(1/k)P}\cdot C=(1/k)\L_P \cdot C=1$. It follows that $\L_{(1/k)P}$ restricted to every torus-invariant curve $C$ has degree $1$, i.e. $(X|_C,\L_{(1/k)P}|_C)\cong (\pn 1,\O_{\pn 1}(1))$, which proves the claim.
\end{proof}




To increase readability we make the following two notational definitions.
\begin{definition}
Let $P$ be a smooth polytope, we say that $P$ is \emph{canonically positioned} if $P$ has a vertex at the origin and an edge along every axis in the coordinate direction.
\end{definition}

\begin{definition}
Let $\hat{e}_1,\dots,\hat{e}_n$ be a basis for $\R^n=\R^{n-r}\times \R^r$. For a Cayley polytope $P:=[P_0*\dots*P_r]^k=\conv\{(P_0\times 0)\cup (P_1\times k\hat{e}_{n-r})\cup \cdots\cup (P_r\times k\hat{e}_n)\} \subset \R^{n-r}\times \R^r$ we say that $P$ has its \emph{altitudes} in the directions of $\hat{e}_{n-r}$ to $\hat{e}_n$.
\end{definition}

\begin{lemma}\label{triplets}
Let $P$ be a smooth canonically positioned polytope such that every facet of $P$ is a Cayley polytope of order $k$. Consider the set of triplets $(F_j,F_l,i)$ such that $i\in \{1,\dots,n\}$ while $F_j$ and $F_l$ are two distinct facets of $P$ contained in coordinate hyperplanes $H_{x_j=0}$ and $H_{x_l=0}$, with $i\ne j$ and $i\ne l$. If there exist a triplet $(F_j,F_l,i)$ such that $F_j$ and $F_l$ both have an altitude in direction $\hat{e}_i$, then $P\subseteq H_{x_i=0}^+\cap H_{x_i=k}^-$.
\end{lemma}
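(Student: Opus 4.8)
The plan is to treat the two inclusions separately. The lower inclusion $P\subseteq H_{x_i=0}^+$ is immediate: since $P$ is canonically positioned, the origin is a vertex whose $n$ edge-directions are $\hat{e}_1,\dots,\hat{e}_n$, and smoothness forces the local cone at the origin to be exactly the nonnegative orthant. By convexity $P$ lies in this orthant, so $x_i\ge 0$ on $P$.

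For the upper inclusion $P\subseteq H_{x_i=k}^-$ I would first extract the geometric consequences of the hypothesis. Since $F_j\subseteq H_{x_j=0}$ is a Cayley polytope of order $k$ with an altitude in direction $\hat{e}_i$, the coordinate $x_i$ takes values exactly in $[0,k]$ on $F_j$, with a base at $x_i=0$ and a top at $x_i=k$ (this is the content of Definition \ref{caydef}); the same holds for $F_l$. Moreover, the edge of $P$ issuing from the origin in direction $\hat{e}_i$ exists by canonical positioning and lies in both $F_j$ and $F_l$, because $i\ne j$ and $i\ne l$. This edge has nonzero $\hat{e}_i$-component, so it cannot lie in the base of $F_j$; it is therefore an altitude edge climbing from $x_i=0$ to $x_i=k$, and being parallel to $\hat{e}_i$ it terminates at the vertex $w:=k\hat{e}_i$. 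Thus $w$ is a vertex of $P$ with $x_i(w)=k$ and $x_j(w)=x_l(w)=0$, so $w\in F_j\cap F_l$, sitting at the top of the altitude of each facet.

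The heart of the argument is to show that $x_i$ attains its maximum over $P$ at $w$, which yields $x_i\le k$ globally by convexity. Because $P$ is smooth, $w$ has exactly $n$ edges forming a lattice basis, and the facets of $P$ through $w$ correspond bijectively to these edges, each facet omitting exactly one of them. As $F_j$ and $F_l$ are two distinct such facets, they omit different edges; hence $F_j$ and $F_l$ together contain all $n$ edges at $w$, since the unique edge missing from $F_j$ is present in $F_l$ and vice versa. Now every edge $u$ at $w$ lies in $F_j$ or in $F_l$, and since $x_i\le k$ on each of these facets with equality at $w$, moving along $u$ cannot increase $x_i$, so $\langle \hat{e}_i,u\rangle\le 0$ for every edge $u$ at $w$. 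Therefore $x_i$ is locally, and hence globally, maximized at $w$, proving $x_i\le k$ on $P$.

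The main obstacle, and the precise reason both facets are needed, is exactly this edge-covering step: a single capping facet accounts for only $n-1$ of the $n$ edges at $w$, leaving one edge that could a priori point toward larger $x_i$; the second facet, lying in a \emph{different} coordinate hyperplane, is what closes this gap. The only remaining care is in pinning down the meaning of ``altitude in direction $\hat{e}_i$''—namely that $x_i\in[0,k]$ on the facet and that the origin-edge is a genuine altitude edge of lattice length $k$—which I would justify directly from the Cayley structure of Definition \ref{caydef}.
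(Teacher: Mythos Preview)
Your argument is correct and shares the paper's core idea: both identify the vertex $w=k\hat{e}_i$, observe that the two facets $F_j,F_l$ together contain all $n$ edges at $w$ (each facet, being a simple $(n-1)$-polytope through $w$, contains $n-1$ of them and omits a different one), and conclude that every edge direction at $w$ has non-positive $\hat{e}_i$-component since $x_i\le k$ on each facet.

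Your finish is cleaner than the paper's. You invoke the convexity principle that the tangent cone at $w$ contains $P-w$, so non-positivity of $\langle\hat{e}_i,u\rangle$ for all edge directions $u$ immediately gives $x_i\le k$ on $P$. The paper instead uses smoothness of the $2$-faces to pin down the edge directions explicitly as $-\hat{e}_i$ or $l\hat{e}_i+\hat{e}_j$ with $-k\le l\le 0$, then builds the supporting hyperplane $H=\{x_i/k+\sum_{j\ne i}a_jx_j=1\}$ (with $0\le a_j\le 1$) through $w$ and its neighbours, and argues that $x_i>k$ together with $P$ lying in the first orthant would force a point on the wrong side of $H$. Your tangent-cone step sidesteps this computation entirely; the paper's version, while more laborious here, produces an explicit hyperplane equation of a type that is reused in the proofs of Lemma~\ref{twoheights} and Theorem~\ref{theTHM}.
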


\begin{proof}
Let $(F_1,F_2,i)$ be the stated triplet and consider the vertex $v$ of $P$ at $k\hat{e}_i$. Since $P$ is simple there are exactly $n$ edges through $v$. Since $F_1$ and $F_2$ are simple $(n-1)$-polytopes $n-1$ of the edges through $v$ lies in each of $F_1$ and $F_2$. Because $F_1$ and $F_2$ are distinct there must be at least one edge through $v$ which lies in $F_1$ and not in $F_2$ and conversely. Thus every edge through $v$ lies in $F_1$ or $F_2$. Since $F_1$ and $F_2$ have their altitude in the direction of $\hat{e}_i$ the edge-directions through $v$ all have a non-positive component in the $\hat{e}_i$ direction. From the fact that all 2-faces of $P$ are smooth, it then follows that the edge directions through $v$ are $-\hat{e}_i$ or of the form $l\hat{e}_i+\hat{e_j}$ where $ -k\le l \le 0$ and $i\ne j$. Thus the equation for the hyperplane $H$ through $v$ and the first lattice points along the edges with edge-direction $l\hat{e}_i+\hat{e}_j$ can be written as $f(x)=x_i/k+\sum_{j=1\\j\ne i}^n a_jx_j-1=0$, where $0\le a_j\le 1$ for all $j\in \{1,\dots,n\}$.  As a consequence if there exist a $p\in P$ such that $\langle p,\hat{e}_i\rangle >k$ then $f(p)>0$, since $P$ is contained in the first orthant. However $H$ is a supporting hyperplane of $P$ and $P\subset H^-$, which gives a contradiction.
\end{proof}

\begin{lemma}\label{contcay}
Let $P\subset \R^n$ be an $n$-polytope such that every edge of $P$ has length at least $k$. If $P\subseteq H_{x_i=0}^+\cap H_{x_i=k}^-$, then there exist lower dimensional polytopes $P_0$ and $P_1$ such that $P=[P_0*P_1]^k$ with altitude in the direction of $\hat{e}_i$.
\end{lemma}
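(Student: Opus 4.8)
The plan is to construct the two factor polytopes as the bottom and top faces of $P$ with respect to the coordinate functional $x_i$, and then to show that $P$ is their convex hull. Concretely, I would set $P_0:=P\cap H_{x_i=0}$ and $P_1:=P\cap H_{x_i=k}$, viewed as lower dimensional lattice polytopes inside the respective coordinate hyperplanes. Identifying $H_{x_i=0}$ with $\R^{n-1}$ by forgetting the $i$-th coordinate, and likewise $H_{x_i=k}$, the desired conclusion $P=[P_0*P_1]^k$ with altitude in the direction of $\hat{e}_i$ is precisely the assertion that $P=\conv(P_0\cup P_1)$, where $P_0$ sits at height $0$ and $P_1$ at height $k$ along $\hat{e}_i$.

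The key step, which I expect to be the heart of the argument, is to show that $P$ has no vertex $v$ with $0<\langle v,\hat{e}_i\rangle<k$. For this I would examine the edges emanating from such a $v$. Write such an edge as the segment from $v$ to $v+\lambda u$, where $u$ is a primitive lattice vector and $\lambda\ge k$ is its lattice length. Since $P\subseteq H_{x_i=0}^+\cap H_{x_i=k}^-$, the $i$-th coordinate of the other endpoint lies in $[0,k]$, so $|\lambda u_i|\le k$. If $u_i\ne 0$ then $|u_i|\ge 1$ forces $\lambda=k$ and $|u_i|=1$, whence the endpoint has $i$-th coordinate $\langle v,\hat{e}_i\rangle\pm k$, which leaves the strip $[0,k]$ because $0<\langle v,\hat{e}_i\rangle<k$ — a contradiction. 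Hence every edge at $v$ satisfies $u_i=0$, i.e. lies in the single hyperplane $\{x_i=\langle v,\hat{e}_i\rangle\}$. But the edge directions at a vertex of an $n$-dimensional polytope are the extreme rays of the (pointed, full dimensional) tangent cone at that vertex, so they must span $\R^n$ and cannot all lie in one hyperplane. This contradiction rules out vertices strictly inside the strip.

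It then follows that every vertex of $P$ lies in $H_{x_i=0}$ or in $H_{x_i=k}$, so the vertex set of $P$ is the disjoint union of those of $P_0$ and $P_1$. Neither face can contain all the vertices, since otherwise $P$ would be contained in a single coordinate hyperplane, contradicting that it is $n$-dimensional; hence both $P_0$ and $P_1$ are nonempty (lower dimensional) lattice polytopes. Passing to convex hulls gives $P=\conv(\mathrm{vert}(P))=\conv(P_0\cup P_1)$, and by construction this is exactly $P=[P_0*P_1]^k$ with altitude in the direction of $\hat{e}_i$.

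The only genuine obstacle is the vertex-exclusion step; once the primitivity of the edge directions is combined with the edge-length bound $\lambda\ge k$ and the full-dimensionality of the tangent cone, the remaining identification of $P$ as a convex hull is routine.
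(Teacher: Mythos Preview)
Your proof is correct and follows essentially the same approach as the paper's: both reduce to showing that no vertex can have $i$-th coordinate strictly between $0$ and $k$, by observing that the edge-length bound forces every edge at such a vertex to have zero $i$-th component. The only cosmetic difference is in the final contradiction---you invoke full-dimensionality of the tangent cone at $v$, whereas the paper propagates the horizontality of edges along the (connected) edge graph of $P$ to force $P$ into a hyperplane; your version is arguably cleaner.
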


\begin{proof}
By the definition of Cayley polytopes it is enough to show that every vertex of $P$ lies in either the hyperplane $H_{x_i=0}$ or the hyperplane $H_{x_i=k}$. To this end let $v$ be a vertex of $P$ and assume that $v\not\in H_{x_i=0}$ and $v\not\in H_{x_i=k}$. If $\hat{e}$ is the direction of an edge through $v$ then the $i$:th component of $\hat{e}$ must be be identically zero, because $P\subset H_{x_i=0}^+\cap H_{x_i=k}^-$ and the edge with direction $\hat{e}$ has length at least $k$. Hence $v$ and every vertex sharing an edge with $v$ lie in a common hyperplane. Because the graph of $P$ is connected this implies that every vertex of $P$ lies in the same hyperplane, which contradicts that $P$ is full dimensional. 

\end{proof}

\begin{lemma}\label{twoheights}
Let $P\cong [P_0*P_1]^1$ be smooth and canonically positioned with its altitude in the direction of $\hat{e}_1$. If  $e=\{\hat{e}_1,\hat{e}_2\}$  is an edge of $P$, then $\codim(P_1)>1$. Moreover there exist lower-dimensional polytopes $P_0'$,  $P_1'$ and $P_2'=P_1$ such that $P\cong [P_0'*P_1'*P_2']^1$. 
\end{lemma}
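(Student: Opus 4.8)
The plan is to extract everything from the single vertex $v:=\hat{e}_1$. By the order-$1$ Cayley structure and canonical positioning, $v$ is the far endpoint of the altitude edge from the origin, hence a vertex of $P_1$ sitting in the supporting hyperplane $H_{x_1=1}$. Since $P$ is smooth there are exactly $n$ edges through $v$ and their primitive directions form a lattice basis. Two of these point ``downward'': the altitude edge back to the origin, of direction $-\hat{e}_1$, and the given edge $e$, of direction $\hat{e}_2-\hat{e}_1$; both leave $H_{x_1=1}$ and so are not edges of the face $P_1=P\cap H_{x_1=1}$. Consequently at most $n-2$ edges through $v$ remain inside $P_1$, and since $P_1$ is a face of a smooth polytope it is smooth, hence simple, so $\dim P_1$ equals that edge count. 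This yields $\dim P_1\le n-2$, i.e. $\codim(P_1)=n-\dim P_1\ge 2>1$, which is the first assertion.

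For the ``moreover'' part I would manufacture a second altitude direction from the local dual basis at $v$. Write $u_1=-\hat{e}_1$, $u_2=\hat{e}_2-\hat{e}_1$, and let $u_3,\dots,u_n$ be the edge-directions lying in $P_1$ (each with vanishing $x_1$-component). Let $\ell_2$ be the dual-basis functional with $\ell_2(u_2)=1$ and $\ell_2(u_j)=0$ otherwise; by smoothness $\ell_2$ is a primitive lattice functional, and it is the inner normal of the facet through $v$ that omits the edge $e$. One checks directly that $\ell_2\ge 0$ on $P$, that $\ell_2$ vanishes identically on $P_1$ (because $\ell_2(\hat{e}_1)=0$ and $\ell_2(u_j)=0$ for $j\ge 3$), and that $\ell_2(\hat{e}_2)=1$. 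The decisive computation is that the functional $m:=\ell_2+x_1$ is non-increasing along every edge through $v$, so $v$ maximises $m$ on $P$ with $m(v)=1$; equivalently $\{m=1\}$ is the facet omitting the altitude edge, giving $P\subseteq\{\ell_2+x_1\le 1\}$.

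With the three inequalities $x_1\ge 0$, $\ell_2\ge 0$ and $\ell_2+x_1\le 1$, the lattice map $(\ell_2,x_1)$ carries $P$ into the standard $2$-simplex, and it remains to force every vertex onto a corner. Each vertex of the order-$1$ Cayley polytope has $x_1\in\{0,1\}$; those with $x_1=1$ lie in $P_1$, where $\ell_2=0$, hence sit at $(0,1)$. The vertices with $x_1=0$ are exactly the vertices of the full-dimensional smooth facet $P_0=P\cap H_{x_1=0}$ (full-dimensional because the origin already carries the $n-1$ edges $\hat{e}_2,\dots,\hat{e}_n$), and on $P_0$ the inequalities restrict to $0\le\ell_2\le 1$. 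Applying Lemma \ref{contcay} to $P_0$ with $\ell_2$ as coordinate (all edges of $P_0$ have lattice length $\ge 1$) gives $P_0\cong[P_0'*P_1']^1$ with altitude $\ell_2$, so these vertices have $\ell_2\in\{0,1\}$ and land on the corners $(0,0)$ and $(1,0)$. Since $\{\ell_2,x_1\}$ extends to a lattice basis of the dual lattice (both are integral combinations of the dual basis at $v$ with unimodular transition), after the corresponding unimodular change of coordinates $P$ is the convex hull of its three fibres over the simplex corners, i.e. $P\cong[P_0'*P_1'*P_2']^1$ with $P_2'=P_1$.

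The first assertion and the sign/normalisation statements are routine vertex-figure bookkeeping; the step I expect to demand the most care is pinning down the second altitude functional $\ell_2$ and proving the single inequality $\ell_2+x_1\le 1$ (equivalently, correctly identifying the facet $\{m=1\}$), together with verifying that the reduction to $P_0$ plus Lemma \ref{contcay} genuinely forces the remaining vertices to the corners of the simplex rather than into the relative interior of an edge.
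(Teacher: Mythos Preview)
Your argument is correct and is essentially the paper's: both count edges at $v=\hat e_1$ for the codimension statement, then exploit the facet of $P$ through $v$ that omits the altitude edge to manufacture a second altitude and invoke Lemma~\ref{contcay}. One small fix: take $u_3,\dots,u_n$ to be \emph{all} the remaining primitive edge directions at $v$, not just those ``lying in $P_1$''---there may be fewer than $n-2$ of the latter, and without the full list the dual basis (hence $\ell_2$) is not well-defined. With this reading your inequalities survive: $m(u_j)=\ell_2(u_j)+x_1(u_j)\le 0$ for every $j$ (so $m\le 1$ on $P$ even if $\{m=1\}$ is only a supporting hyperplane, not a facet), and $\ell_2\equiv 0$ on $P_1$ because the edges of $P_1$ at $v$ are a subset of $\{u_3,\dots,u_n\}$.

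The paper runs the same idea in explicit coordinates. Smoothness of the $2$-faces through $0$ and $v$ (the computation ``analogous to Lemma~\ref{triplets}'') forces $u_j=l_j\hat e_1+\hat e_j$ with $l_j\in\{-1,0\}$ for $j\ge 2$, so in fact your $\ell_2$ is just $x_2$; the facet through $v$ omitting the edge to the origin is then $\{x_1+x_2+\sum_{i\in I}x_i=1\}$ for some $I$, and since $P$ sits in the first orthant this yields $x_2\le 1$ directly. The paper then applies Lemma~\ref{contcay} twice (once in $x_1$, once in $x_2$) to force the vertices onto the corners of the $(x_1,x_2)$-simplex, in place of your single application of Lemma~\ref{contcay} to $P_0$; the conclusions coincide.
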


\begin{proof}
Let $v$ be a vertex of $P$ in the hyperplane $H_{x_1=k}$. Because $P_1=P\cap H_{x_1=1}$ and $P_1$ is smooth we have that there are $\dim(P_1)$ many edges through $v$, which are contained in the hyperplane $H_{x_1=1}$. On the other hand there are exactly $n$ edges of $P$ through the vertex $v$, where $e$ and the edge $\{0,\hat{e}_1\}$ are edges through $v$ that are not contained in $H_{x_1=1}$. Thus $\codim(P_1)>1$. Let $H$ be the supporting hyperplane of $P$ passing through $v$ as well as every neighbouring vertex of $v$ except the origin. A linear algebra computation, completely analogous to the one in the proof of Lemma \ref{triplets}, implies that $H$ is given by an equation of the form $H(x):=x_1+x_2+\sum_{i\in I}x_i-1=0$, where $I$ is some subset of $\{1,\dots,n\}$. This implies that $P\subset H_{x_1=1}^-$ and $P\subset H_{x_2=1}^-$ since if $p\in P$ and $\langle p,\hat{e}_i\rangle >1$, where $i=1$ or $2$, then $H(p)>0$, because $P$ is contained in the first orthant. Thus $P\cong[P_0*P_1*P_2]^1$ by Lemma \ref{contcay} applied twice.
\end{proof}

\begin{theorem}
Let $(X,\L)$ be a smooth polarized toric variety and let $P_\L$ be the polytope associated to the complete linear series $|\L|$. $\L$ is $k$-jet spanned but not $(k+1)$-jet spanned at every point $x\in X$ if and only if $P\cong [P_0*P_1]^k$ for some lower dimensional polytopes $P_0$ and $P_1$ and every edge of $P$ has lattice length at least $k$.
\end{theorem}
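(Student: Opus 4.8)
The plan is to establish the two implications separately, beginning with the easier backward direction. Suppose $P\cong[P_0*P_1]^k$ with every edge of lattice length at least $k$, and place the altitude along $\hat{e}_n$ so that $P\subseteq H_{x_n=0}^+\cap H_{x_n=k}^-$. Since every edge through every vertex then has length at least $k$, Proposition \ref{bigprop}(1) makes $\L$ $k$-jet spanned at every fixpoint and Proposition \ref{bigprop}(2) upgrades this to every point. To see that $(k+1)$-jet spannedness fails, note that each lattice point of $P$ has last coordinate in $\{0,1,\dots,k\}$, so the degree $k+1$ polynomial $\prod_{j=0}^{k}(x_n-j)$ vanishes on $P\cap M$; Proposition \ref{bigprop}(3), applied with $k+1$ in place of $k$, shows the osculating space of order $k+1$ is not full dimensional at the general point, and Remark \ref{fitideal}(2) propagates this to every point.

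For the forward direction, assume $\L$ is $k$-jet spanned but not $(k+1)$-jet spanned at every $x\in X$. That every edge of $P$ has lattice length at least $k$ is immediate from Proposition \ref{bigprop}(1) applied at the fixpoints. The heart of the matter is to produce the Cayley decomposition, and I would argue by induction on $n=\dim P$, using the classification of Perkinson for surfaces and threefolds (Theorem \ref{perk}, recovered in Proposition \ref{lowdim}) as the base case. For the inductive step, position $P$ canonically. Because the jet matrix has monomial entries, failure of $(k+1)$-jet spannedness at the general point is equivalent, via Remark \ref{fitideal}(1)--(2), to the existence of a nonzero polynomial $f$ of degree $k+1$ vanishing on $P\cap M$; $k$-jet spannedness guarantees that no nonzero polynomial of degree at most $k$ does so.

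I would then transfer the Cayley structure from the facets. Each coordinate facet $F_i\subset H_{x_i=0}$ is the polytope of a smooth $(n-1)$-dimensional toric variety whose edges again have length at least $k$, so $\L|_{F_i}$ is $k$-jet spanned; and whenever $x_i\nmid f$ the restriction of $f$ to $H_{x_i=0}$ is a nonzero degree $k+1$ polynomial vanishing on $F_i\cap M$, so $\L|_{F_i}$ fails to be $(k+1)$-jet spanned and the inductive hypothesis presents $F_i$ as a Cayley polytope of order $k$ with some altitude direction $\hat{e}_{m(i)}$. The edge of $P$ from the origin in direction $\hat{e}_{m(i)}$ lies in $F_i$ and, being squeezed by the Cayley structure into $H_{x_{m(i)}=k}^-$, has length exactly $k$ with apex $k\hat{e}_{m(i)}$. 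Analysing the edges of $P$ at such an apex, which lies on all coordinate facets except $F_{m(i)}$, I would force two distinct coordinate facets to carry a common altitude direction $\hat{e}_i$; then Lemma \ref{triplets} yields $P\subseteq H_{x_i=0}^+\cap H_{x_i=k}^-$ and Lemma \ref{contcay}, together with the edge bound, gives $P\cong[P_0*P_1]^k$.

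I expect the main obstacle to be precisely this bookkeeping. The facets on which $f$ vanishes identically --- which should become the base polytopes $P_0,P_1$ --- need not be Cayley of order $k$, so one cannot expect every facet to be Cayley and must instead locate the correct altitude direction from the controlled (the $x_i\nmid f$) facets alone. Arranging the apex analysis so that two such facets genuinely share an altitude, and so that the hypotheses of Lemma \ref{triplets} are met on the relevant subconfiguration, is the delicate point; the auxiliary Lemmas \ref{dilation} and \ref{twoheights} are presumably the tools for handling the resulting low-codimension degeneracies and the case in which a short edge $\{\hat{e}_1,\hat{e}_2\}$ appears, and verifying the Perkinson base case in the precise form needed for the induction is a further thing to check.
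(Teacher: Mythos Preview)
Your backward direction is correct and essentially identical to the paper's.

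Your forward direction, however, has a genuine gap that you half-acknowledge but do not close. Even in the favourable case where \emph{every} coordinate facet $F_i$ is Cayley of order $k$, the pigeonhole you invoke does not go through: each $F_i$ has altitude $\hat e_{m(i)}$ with $m(i)\ne i$, so $m$ is a fixed-point-free self-map of $\{1,\dots,n\}$ and may well be a bijection (a derangement), in which case no two facets share an altitude and Lemma~\ref{triplets} never fires. The paper breaks this deadlock by an extra reduction (its Claim~2): if some edge has length $>k$, place it on the $x_1$-axis so that \emph{no} facet can have altitude $\hat e_1$, giving $n$ facets among $n-1$ directions and a genuine pigeonhole; if instead some $2$-face is $k\Delta_2$, Lemma~\ref{twoheights} manufactures a facet with \emph{two} altitudes and again forces a collision. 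Only after both obstructions are removed does the paper finish, and then not by pigeonhole at all, but by using Lemma~\ref{dilation} to reduce to $k=1$ and doing an explicit analysis of the vanishing polynomial $f_P=\sum_i c_i x_i(x_i-1)$ on the configuration $\{0,\hat e_i,\hat e_i+\hat e_j\}$.

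The unfavourable case---some $F_i$ is \emph{not} Cayley, equivalently $x_i\mid f$---is handled in the paper by a mechanism entirely absent from your sketch. Rather than trying to patch the pigeonhole, the paper observes that $f=x_1 g$ with $\deg g=k$, passes to the slice $P'=P\cap H_{x_1\ge 1}$, checks that $P'$ is normally equivalent to $P$ with all edges of length at least $k-1$, and iterates: this forces $f=c\prod_{j=0}^{k}(x_1-j)$ and hence $P\subset H_{x_1=0}^+\cap H_{x_1=k}^-$, so Lemma~\ref{contcay} finishes \emph{directly}, with no appeal to the other facets at all. This slicing argument is the paper's Claim~1 and is the real engine of the proof; your outline does not contain it.

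Finally, the paper's induction starts in dimension~$1$ and is self-contained; invoking Perkinson's classification as your base case is awkward here, since Proposition~\ref{lowdim} is presented in the paper as a \emph{consequence} of the theorem rather than an input to it.
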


\begin{proof}
Assume $P=[P_0*P_1]^k\subset \R^n$ and that every edge of $P$ has length at least $k$. Note that every vertex of $P$ lies in either the face corresponding to $P_0$ or the face corresponding to $P_1$. Since every edge connecting $P_0$ and $P_1$ in $P$ has length exactly $k$ we have that $\L_P$ is $k$-jet spanned, but not $(k+1)$-jet spanned, at every fixpoint of $X_P$ by Proposition \ref{bigprop}. Again by Proposition \ref{bigprop} this implies that $\L_P$ is $k$-jet spanned at the general point. If $e_1,\dots,e_n$ is the standard basis of $\R^n$, then we can assume that $P=\conv\{(P_0\times 0)\cup (P_1\times ke_n)\}$, so the polynomial $f_p=x_n(x_n-1)(x_n-2)\cdots (x_n-k)$ is a degree $k+1$ polynomial vanishing on $P\cap \Z^n$. Thus by Proposition \ref{bigprop} $\L_P$ is not $(k+1)$-jet spanned at the general point. From Remark \ref{fitideal} we can thus conclude that $s(\L,x)=k$ for all points $x\in X$. This proves the ''if'' direction.

We will prove the converse by induction on the dimension of $X$, since the statement is obvious in dimension one. Thus assume that the Theorem holds in dimension $\le n-1$ and that $X$ has dimension $n$. The Theorem will be established by first showing that we can do two reductions. The first reduction is the following:
\begin{claim}\label{allcay}
We may assume that every facet of $P$ is a Cayley polytope.
\end{claim}

\begin{proof}[Proof of Claim \ref{allcay}]
Assume that $P$ has a facet $F$ which is not a Cayley polytope. Note that the toric embedding corresponding to $F$ is $k$-jet spanned since every edge of $F$ has length at least $k$. Hence if $f_P$ is a polynomial of degree $k+1$ vanishing on $P\cap M$ then $f_P$ restricted to the supporting hyperplane of $F$ must be identically zero, since otherwise we would get a contradiction to the Theorem in dimension  $n-1$. Now without loss of generality we may assume that $P$ is canonically positioned and that $F$ is the facet in the hyperplane $H_{x_1=0}$. Then the above assumption implies that $f_P=x_1g$ where $g$ is a degree $k$ polynomial vanishing on $P':=P\cap H_{x_1=1}^+$.

Observe that $(X',\L_{P'})$ is the blow-up of $X$ along a torus-invariant effective (Cartier) divisor $E$, embedded via $L_{P'}=\pi^*\L_P-E$, where $\pi:\tilde{X}\to X$ is the blow-up map. Thus $X'\cong X$ by the universal property of blow-ups, i.e. $P$ and $P'$ are normally equivalent.


Next we show that every edge of $P'$ has length at least $k-1$, which will lead to a proof of the claim. Clearly any edge of $P'$ which corresponds to an edge $e$ in $P$, that either has empty intersection with $H_{x_1=0}$ or intersect $H_{x_1=0}$ in a point, will have length at least $k-1$. Now consider an edge $e$ of $P$ which lies in the hyperplane $H_{x_1=0}$. Let $v$ be a vertex of $e$, then translating $P$ so that $v$ is at the origin and changing basis yields a isomorphism $\phi$ of $P$ such that:\begin{enumerate}
\item{$\phi(P)$ is positioned with the facet $F$ in the hyperplane $H_{x_1=0}$.}
\item{$\phi(P)$ is canonically positioned.}
\item{$\phi(H_{x_1=c})=H_{x_1=c}$ for all $c\in \R$.}
\end{enumerate}
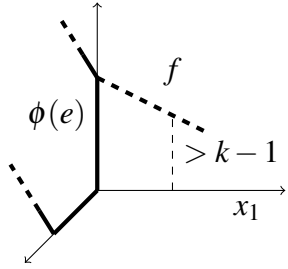
\begin{figure}[h]
\begin{tikzpicture}
\draw [->](0,0,0)--(2.5,0,0);
\draw [->](0,0,0)--(0,2.5,0);
\draw [->](0,0,0)--(0,0,2.5);
\node [below] at (2,0,0) {$x_1$};
\draw [dashed](1,0,0)--(1,1,0);
\node [right]at (1,0.5,0) {$>k-1$};
\node [left] at (0,1,0) {$\phi(e)$};
\draw [ultra thick] (0,0,0)--(0,1.5,0);
\draw [dashed, ultra thick] (0,1.5,0)--(1.5,0.75,0);
\draw [ultra thick] (0,0,0)--(0,0,1.5);
\draw [ultra thick] (0,0,1.5)--(0,0.5,2);
\draw [dashed, ultra thick] (0,0.5,2)--(0,1.5,3);
\draw [ultra thick] (0,1.5,0)--(0,2,0.5);
\draw [dashed, ultra thick] (0,2,0.5)--(0,2.75,1.25);
\node [above] at (1,1.25,0) {$f$};
\end{tikzpicture}
\caption{Figure for the proof of Claim \ref{allcay}}
\end{figure}
By construction $\phi(P')=\phi(P\cap H_{x_1=1}^+)=\phi(P)\cap H_{x_1=1}^+$. Assume $\phi(e)$ lies in the $x_1x_2$-plane. By smoothness there exist an edge $f$ of $\phi(P)$ in the $x_1x_2$-plane passing through the vertex of $e$ that is not the origin. Because $\phi(e)$ and $f$ both have length at least $k$ and $\phi(P)$ is contained in the first orthant the edge corresponding to $\phi(e)$ in $\phi(P')$ has length at least $k-1$. Because edge lengths are invariant under isomorphisms, we conclude that $P$ and $P'$ are normally equivalent with $P'$ satisfying the assumptions of the Theorem with $k-1$ in place of $k$. By iterating the procedure we get that $f_P=x_1(x_1-1)\cdots(x_1-k-1)h$ where $h$ is a degree one polynomial vanishing on $P\cap H_{x_1=k-1}^+$.  Now there are two cases: Either $\dim(P)\cap H_{x_1=k})=n-1$ in which case we can iterate the procedure once more to get that $f_p=c\prod_{i=0}^k(x_1-i)$, where $c$ is a constant. Or $\dim(P)\cap H_{x_1=k})<n-1$ in which case the hyperplane $H_{x_1=k}$ cuts out a face of $P$ so that $P\subset H_{x_1=k}^-$. In both cases $P\subset H_{x_1=0}^+\cap H_{x_1=k}^-$, so Lemma \ref{contcay} implies that $P\cong[P_0*P_1]^k$. This proves Claim \ref{allcay}.
\end{proof}

Our second claim is the following:
\begin{claim}\label{step2}
Without loss of generality we can assume that every edge of $P$ has length exactly $k$ and no 2-face of $P$ is isomorphic to $k\Delta_2$.
\end{claim}

\begin{proof}[Proof of Claim \ref{step2}]
By Claim \ref{allcay} we can assume that every facet of $P$ is a Cayley polytope. Assume moreover that $P$ has an edge $e$ of length strictly larger then $k$. Let $P$ be canonically positioned with $e$ along the $x_1$-axis. Then the facets of $P$ in the coordinate hyperplanes must have their altitudes in the direction of a coordinate axis. Because none of these $n$ facets can have its altitude on the $x_1$-axis there are  $n-1$ possible choices for the the direction of the altitudes. Thus by the pigeonhole principle at least two facets have their altitude in the same direction. By Lemmas \ref{triplets} and \ref{contcay} we conclude that $P\cong[P_0*P_1]^k$ in this case. Assume instead that $P$ has a 2-face $F$ which is isomorphic to $k\Delta_2$. Note that by the above and Lemma \ref{dilation} we may assume that $k=1$, since $s(\L,x)=s(k\L',x)=k$ at every point implies that $s(\L',x)=1$ at every point. Let $P$ be cannoncially positioned with $F$ contained in the linear space given by $x_3=x_4=\dots =x_n=0$. By Lemmas \ref{triplets} and \ref{contcay} we may moreover assume that every coordinate direction is the altitude of exactly one facet contained in a coordinate hyperplane. There are two cases: either there exist a facet which has its altitude in the direction of the $x_1$- or $x_2$-axis but does not contain $F$, or the facets having their altitude in the direction of $x_1$ and $x_2$ both contain $F$. In the first case let $H_1$ be the facet having its altitude in the direction of $x_1$. Then, up to interchanging the role of $x_1$ and $x_2$, all edge-directions through the vertex $v$ at $\hat{e}_1$ are either the edge directions of $v$ in $H_1$ or $-\hat{e}_1+\hat{e}_2$. Now the exact same argument as in the proof of Lemma \ref{triplets}, together with Lemma \ref{contcay} implies that $P\cong[P_0*P_1]^k$. In the second case let $H_1$ be the facet with its altitude in the direction of $x_1$. Then Lemma \ref{twoheights} implies that $H_1$ has at least two altitudes. Thus in this case $P\cong [P_0*P_1]^k$ follows from the pigeonhole principle together with Lemma \ref{triplets} and \ref{contcay}. This proves Claim \ref{step2}.
\end{proof}

By Claim \ref{step2} we may assume  that every edge of $P$ has length $k$ and by Lemma \ref{dilation} that $k=1$. If we assume that $P$ is canonically positioned, then the second part of Claim \ref{step2} implies that we also without loss of generality can assume that $0,e_i,e_i+e_j\in P$ for all $i,j\in \{1,\dots,n\}$. Now saying that $(X,\L)$ is not 2-jet spanned at the general point implies that there exist a degree 2 polynomial $f_P(x)$ vanishing on $P\cap M$. The fact that $f_P(x)$ vanish on $0,\hat{e}_i,\hat{e}_i+\hat{e}_j$ for all $i,j\in \{1,\dots,n\}$ implies that $f_P(x)=\sum_{i=0}^nc_ix_i(x_i-1)$ for some coefficients  $c_1,\dots,c_n\in \C$. We will show that either $P\cong[P_0*P_1]^1$ or there is an additional family of lattice points in $P$ which implies that $c_i=0$ for all $i\in \{1,\dots,n\}$. To this end assume that there exist an $i$ such that $2\hat{e}_i+\hat{e}_j\not\in P$ for all $j\in \{1,\dots,n\}$ and note that  every 2-face in a linear space determined by the equations $x_1=x_2=x_3=\dots =\check{x_i}=\dots=\check{x_j}=\dots =x_n=0$ is smooth. Thus the edge-directions through the vertex $v$ at $\hat{e}_i$ are of the form $-\hat{e}_i$, $\hat{e}_j$ or $-\hat{e}_i+\hat{e}_j$ for $i\ne j$. This implies that the supporting hyperplane of $P$ through $v$ and all of its neighbouring vertices except 0 is determined by an equation of the form $H(x)=x_i+\sum_{j\in I} x_j -1=0$, where $I$ is some subset of $\{1,\dots,\check{i},\dots,n\}$. Thus $P\subset H_{x_i=1}^-$ since if $p\in P$ and $\langle p,\hat{e}_i\rangle>1$, then $H(p)>0$ because $P$ is contained in the first orthant. Hence if there exist an $i$ such that $2\hat{e}_i+\hat{e}_j\not\in P$ for all $j\in \{1,\dots,n\}$, then $P\cong [P_0*P_1]^1$. On the other hand if for some $i$ there exist a $j\in \{1,\dots,n\}$ such that $2\hat{e}_i+\hat{e}_j\in P$, then $f_P(x)$ has to vanish at that lattice point, which implies that $c_i=0$. Thus either $P\cong [P_0*P_1^1$ or $f_P(x)=0$, which is a contradiction.
\end{proof}

\begin{corollary}
Let $(X,\L)$ be a smooth polarized toric variety, let $P_\L$ be the corresponding smooth polytope and let $k\in \N$. Then the following statements are equivalent:
\begin{enumerate}[i)]
\item{$s(\L,x)=k$ at every point $x\in X$.}
\item{$s(\L,x)=k$ at the fixpoints and at the general point.}
\item{$\epsilon(X,\L;x)=k$ at every point $x\in X$.}
\item{$\epsilon(X,\L;x)=k$ at the fixpoints and at the general point.}
\item{$P_\L\cong[P_0*P_1]^k$ for some lower dimensional polytopes $P_0$ and $P_1$ and every edge of $P$ has length at least $k$.}
\end{enumerate}
\end{corollary}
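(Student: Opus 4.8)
The plan is to prove the five statements equivalent by first disposing of the purely jet-theoretic equivalences and then attaching the two Seshadri statements to the resulting cluster through a short cycle. Observe first that condition (i) is literally the hypothesis ``$\L$ is $k$-jet spanned but not $(k+1)$-jet spanned at every point'' of Theorem \ref{theTHM}, while (v) is exactly its conclusion; hence (i) $\Leftrightarrow$ (v) is nothing but the main theorem. The implication (i) $\Rightarrow$ (ii) is trivial since the fixpoints and the general point are particular points, and (ii) $\Rightarrow$ (i) is precisely Remark \ref{fitideal}(3). Thus (i), (ii) and (v) are already mutually equivalent, and it remains only to weave in (iii) and (iv).

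For the forward link I would prove (v) $\Rightarrow$ (iii). Assume $P\cong[P_0*P_1]^k$ with every edge of length at least $k$. At a fixpoint $x(v)$ Proposition \ref{epstok} gives $\epsilon(X,\L;x(v))=s(\L,x(v))$, and by Proposition \ref{bigprop}(1) the latter equals the shortest edge length through $v$; since every edge has length $\ge k$ while every vertex is joined to the opposite factor by an altitude edge of length exactly $k$, this common value is $k$. At the general point Example \ref{cayseps} gives $\epsilon=k$. I then invoke the two semicontinuity properties of Seshadri constants on a smooth toric variety: $\epsilon$ is constant on the big torus orbit and attains its maximum there (so $\epsilon(X,\L;x)\le k$ for all $x$), while by Di Rocco's result its minimum is attained at a fixpoint (so $\epsilon(X,\L;x)\ge k$ for all $x$); see \cite{primer}. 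Together these force $\epsilon(X,\L;x)=k$ at every point, which is (iii). The implication (iii) $\Rightarrow$ (iv) is immediate.

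It remains to close the cycle with (iv) $\Rightarrow$ (ii), which I expect to be the main obstacle. From (iv) and Proposition \ref{epstok} we get $s(\L,x)=k$ at every fixpoint, which is the first half of (ii); by Proposition \ref{bigprop}(2) this already forces $\L$ to be $k$-jet spanned everywhere, so $s(\L,x_{\mathrm{gen}})\ge k$. The delicate point is the reverse inequality at the general point: here I would use the standard bound $s(\L,x)\le \epsilon(X,\L;x)$, which follows from Theorem \ref{Demlimit} together with the superadditivity of jet separation under tensor products (i.e. $s(t\L,x)\ge t\,s(\L,x)$, so that $\epsilon=\lim_t s(t\L,x)/t\ge s(\L,x)$). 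Applying it at the general point gives $s(\L,x_{\mathrm{gen}})\le \epsilon(X,\L;x_{\mathrm{gen}})=k$, whence $s(\L,x_{\mathrm{gen}})=k$ and (ii) holds. This completes the cycle (i) $\Leftrightarrow$ (ii) $\Leftrightarrow$ (v) $\Rightarrow$ (iii) $\Rightarrow$ (iv) $\Rightarrow$ (ii).

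The conceptual crux, and the reason the fixpoint hypothesis cannot be dropped from (iv), is precisely that the general-point Seshadri constant alone does not control the polytope: Example \ref{delPezzo} exhibits a non-Cayley polytope with $\epsilon=2$ at the general point but $\epsilon=1$ at every fixpoint, so it satisfies neither (iii) nor (iv) for any single $k$. What makes the argument work is that (iv) ties the fixpoint value (which by Proposition \ref{epstok} pins down all edge lengths) to the general value simultaneously, and the inequality $s\le\epsilon$ transfers the general Seshadri bound back into the jet-theoretic world where Theorem \ref{theTHM} applies. I therefore expect the only genuine work beyond citing the quoted results to be (a) verifying that every vertex of a length-$\ge k$ Cayley polytope carries an altitude edge of length exactly $k$, needed for the fixpoint computation in (v) $\Rightarrow$ (iii), and (b) justifying the inequality $s\le\epsilon$ at the general point.
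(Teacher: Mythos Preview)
Your proof is correct and overlaps substantially with the paper's: the cluster (i) $\Leftrightarrow$ (ii) $\Leftrightarrow$ (v) and the step (iv) $\Rightarrow$ (ii) via the inequality $s(\L,x)\le\epsilon(X,\L;x)$ are handled identically. The one genuine difference is how you obtain $\epsilon(X,\L;x)\le k$ at \emph{every} point. You do this inside (v) $\Rightarrow$ (iii) by appealing directly to a semicontinuity property of Seshadri constants, namely that $\epsilon$ attains its maximum on the dense torus orbit; this is true (constancy on orbits plus the standard fact that $\epsilon$ is maximal at very general points, combined with the finiteness of orbits), but it is not proved in the paper and you are importing it from outside. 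The paper instead derives the same upper bound purely through jets: in its (iv) $\Rightarrow$ (iii) step it uses Remark \ref{fitideal}(2) to get $s(t\L,x)\le s(t\L,1)$ for all $t$, then takes the limit via Theorem \ref{Demlimit} to conclude $\epsilon(X,\L;x)\le\epsilon(X,\L;1)=k$. Your route is a little shorter but relies on an external fact; the paper's stays entirely within the monomial/fitting-ideal toolkit already developed. Either way the argument closes.
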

\begin{proof}
That \emph{i)} and \emph{ii)} are equivalent is part of Remark \ref{fitideal}. Next \emph{iv)} implies \emph{ii)} by Theorem \ref{Demlimit}, Proposition \ref{epstok} and Proposition \ref{bigprop} since $t\L$ is $tk$-jet spanned at $x\in X$ if $\L$ is $k$-jet spanned at $x$. Now Theorem \ref{theTHM} shows that \emph{ii)} and \emph{v)} are equivalent. Moreover \emph{v)} implies {iv)} by Example \ref{cayseps} and Proposition \ref{epstok}. Finally it is clear that \emph{iii)}  implies \emph{iv)}, so all that remains to prove is that \emph{iv)} implies \emph{iii)}. To this end assume \emph{iv)} and note that by what we just proved $s(\L,x)=k$ at every point i.e. $\epsilon(X,\L;x)\ge k$ at every point. By Remark \ref{fitideal} $s(t\L,x)\le s(t\L,1)=tk$ for all points $x\in X$ and $t\in \N$. Thus dividing by $t$ and taking the limit as $t\to \infty$, we get that $\epsilon(X,\L;x)\le \epsilon(X,\L;1)=k$ at every point $x\in X$ by Theorem \ref{Demlimit}. This establishes the last implication needed. 
\end{proof}

Recall that a consequence of the polytope $P$ decomposing as a Cayley sum $[P_0*\cdots *P_1]^k$ is that there exist a birational morphism $\pi:X'\to X$, where $X'$ is a projective fiber bundle, as previously noted. Finally we prove that our classification coincide with the one of Perkinson by showing that if $P$ is a smooth Cayley polytope of dimension at most 3, then $P$ is indeed strict. 

\begin{prop}\label{lowdim}
Let $P$ is a smooth polytope of dimension at most 3. If $P\cong [P_0*P_1]^k$ for some $k\in \N$ and lower dimensional polytopes $P_0$ and $P_1$, then $P$ is a strict Cayley polytope of order $k$. 
\end{prop}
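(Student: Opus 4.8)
The plan is to use Proposition~\ref{cayfiber}, which reduces the statement to showing that the smooth toric variety $X_P$ attached to $P\cong[P_0*P_1]^k$ is a projective fiber bundle; equivalently, I will exhibit a \emph{strict} Cayley presentation of $P$ directly. Throughout I use that $P_0\times 0$ and $P_1\times k$ are the two extreme faces of $P$ cut out by the altitude functional, and I argue by cases on the pair $(\dim P_0,\dim P_1)$, using that $P$ is \emph{simple} (each vertex of the smooth $3$-polytope lies on exactly three edges whose primitive directions form a lattice basis). Dimensions $\le 2$ are immediate: a segment is $[\mathrm{pt}*\mathrm{pt}]^k$, a smooth quadrilateral is a $\pn1$-bundle over $\pn1$, and a smooth triangle is $\cong k\Delta_2=[\mathrm{pt}*\mathrm{pt}*\mathrm{pt}]^k$. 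In dimension $3$ two configurations collapse to a simplex: if $\dim P_1=0$ (a pyramid) then counting edges at the apex forces $P_0$ to be a triangle, and if $\dim P_0=\dim P_1=1$ then $P$ has four affinely independent vertices; in both cases $P$ is a smooth $3$-simplex, hence $X_P\cong\pn3$ and $P\cong k\Delta_3$, which is strict. (That a smooth simplex is $d\Delta_n$ up to lattice isomorphism follows from the fact that a smooth complete toric variety of Picard rank one is a projective space, and the order is $k$ because the altitude functional separating the two given faces takes values $0$ and $k$, which matches the corresponding lattice height in $d\Delta_3$ and forces $d=k$.)

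The first genuine case is $\dim P_0=\dim P_1=2$, so that both $P_0$ and $P_1$ are facets. Here simplicity gives a perfect matching of the vertices of $P_0$ with those of $P_1$ along the vertical edges (each facet-vertex carries its two facet-edges plus exactly one edge into the opposite facet), and each pair of matched edges spans a quadrilateral lateral facet. For such a facet with bottom edge $[a,b]$ at height $0$ and top edge $[a',b']$ at height $k$, coplanarity forces $b'-a'=\lambda(b-a)$: writing $b'-a=\alpha(b-a)+\beta(a'-a)$ and comparing altitudes gives $\beta=1$, whence $[a',b']\parallel[a,b]$. Thus corresponding edges of $P_0$ and $P_1$ are parallel, so $P_0$ and $P_1$ share a common normal fan $\Sigma$, and therefore $P\cong\Cay^k_\Sigma(P_0,P_1)$ is strict (a $\pn1$-bundle over the surface $X_\Sigma$).

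The remaining, and hardest, case is $\dim P_0=2,\ \dim P_1=1$, say $P_1=[q_0,q_1]$. Counting edges shows each of $q_0,q_1$ is joined to exactly two bottom vertices and each bottom vertex to exactly one top vertex, so $P_0$ is a smooth quadrilateral and $P$ is combinatorially a triangular prism whose two triangular faces are joined by three edges, one lying on each rectangular facet. The key step is to show these three joining edges are mutually parallel: the two rectangular facets through $[q_0,q_1]$ each contain $[q_0,q_1]$ (at height $k$) together with one bottom edge (at height $0$), and the coplanarity computation above forces $[q_0,q_1]$ to be parallel to both of those bottom edges. Projecting out this common direction collapses the two triangular faces onto a single smooth triangle $\cong\ell\Delta_2$ and presents $P\cong\Cay^\ell_{\pn1}(Q_0,Q_1,Q_2)$, the $Q_i$ being the three joining edges (segments in a line, hence automatically normally equivalent); so $P$ is a strict $\pn2$-bundle over $\pn1$. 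I expect the main obstacle to be exactly this case: first the verification that the three prism edges are genuinely parallel (not merely that $P$ is a combinatorial prism), and second the check that the order matches, $\ell=k$. The latter holds because, in the fiber triangle $\ell\Delta_2$, the altitude functional of the presentation $[P_0*P_1]^k$ is minimal along the edge producing the facet $P_0$ and maximal at the opposite vertex producing $P_1$; since a vertex of $\Delta_2$ sits at unit lattice height over the opposite edge, its oscillation is $\ell\cdot1=k$. Lemmas~\ref{triplets} and~\ref{twoheights} can be invoked to package the parallelism and the passage to the longer Cayley presentation.
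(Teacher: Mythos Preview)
Your proof is correct and agrees with the paper's in the easy cases (dimensions $\le 2$; the simplex cases in dimension $3$) and in the case $\dim P_0=\dim P_1=2$, where both you and the paper use the coplanarity of lateral quadrilateral facets to force corresponding edges of $P_0$ and $P_1$ to be parallel. The genuine divergence is the case $\dim P_0=2$, $\dim P_1=1$. The paper observes that $P$ has $6$ vertices and $5$ facets, hence $\Pic(X_P)\cong\Z^2$, and then invokes Kleinschmidt's classification of smooth toric varieties of Picard number two to conclude that $X_P$ is a projective bundle. You instead give a self-contained convex-geometric argument: $P$ is combinatorially a triangular prism, the coplanarity argument forces the three prism edges to be parallel, and projecting along that direction exhibits $P$ as $\Cay^\ell_{\pn1}(Q_0,Q_1,Q_2)$ with $\ell=k$ recovered from the lattice height of the original altitude functional. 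Your route is more elementary---it avoids Kleinschmidt entirely and stays within polytope geometry---at the cost of having to verify the order explicitly; the paper's route is shorter but imports a substantial classification result. One small correction: your closing sentence suggesting that Lemmas~\ref{triplets} and~\ref{twoheights} ``package'' this step is off, since those lemmas are formulated for order $1$ and for facets in coordinate hyperplanes, which is not your setup here; fortunately your direct argument already does all the work, so you should simply drop that sentence.
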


\begin{proof}
The statement is obvious in dimension less than three. Moreover if, in dimension 3, either $P_0$ or $P_1$ is a point or both $P_0$ and $P_1$ are line segment, then $P$ has 4 vertices and equally many facets, thus $P\cong k\Delta_3$ by Mabuchi's Theorem \cite{OdaT}*{Thm. 7.1}. Next assume that $P_0$ is a polygon while $P_1$ is a line segment. Because every vertex in $P_0$ share an edge with a vertex in $P_1$ we see that $P$ has 6 vertices i.e. $P_0$ is a quadrilateral. Because $P$ is simple this implies that $P$ has $5$ facets by Euler's formula. Thus $\Pic(X)\cong \Z^2$, so from Kleinschmidt's Classification Theorem (\cite{Kleinschmidt}, \cite{Cox}*{p.341}), we see that $(X,\L_P)$ is again a projective fiber bundle. 

The only remaining case is if $P_0$ and $P_1$ are both polygons. Note first that through every vertex coming from $P_0$ there is exactly one edge having its other endpoint at a vertex coming from $P_1$ and similarly with $P_0$ and $P_1$ interchanged. This has two consequences: Firstly $P_0$ and $P_1$ have the same number of vertices. Secondly no facet of $P$, except possibly $P_0$ or $P_1$, is a simplex. Now let $H_{P_i}$ be the hyperplane containing the facet $P_i$. If $H$ is a supporting hyperplane of $P$ and $H\ne H_{P_i}$ $i=0,1$, then, by the above, $H$ contains one edge $e_i$ of $P$ coming from $P_i$ for $i=0,1$. However if $L_i$ is the line through $e_i$, then $L_i=H\cap H_{P_i}$ for $i=0,1$. Thus we can conclude that if $H_{P_0}$ and $H_{P_1}$ are parallel then $L_0$ and $L_1$ are parallel in $H$ and $e_0$ and $e_1$ are parallel in $H$. Thus every edge in $P_0$ has a parallel edge  in $P_1$ and these two  edges lie in the same facet of $P$. This determines all edge-directions in $P_0$ and $P_1$ and we conclude that $P_0$ and $P_1$ are normally equivalent.
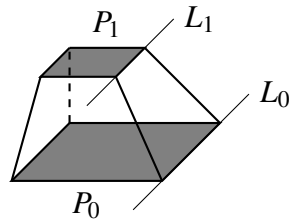
\begin{figure}[h!]
\begin{tikzpicture}
\draw [thick,fill=gray](0,0,0)--(2,0,0)--(2,0,2)--(0,0,2)--(0,0,0);
\draw [thick, fill=gray](0,1,0)--(1,1,0)--(1,1,1)--(0,1,1)--(0,1,0);
\draw (2,0,-1)--(2,0,3);
\draw (1,1,-1)--(1,1,2);
\node [right] at (2,0,-1) {$L_0$};
\node [right] at (1,1,-1) {$L_1$};
\draw [thick] (2,0,0)--(1,1,0);
\draw [thick] (2,0,2)--(1,1,1);
\draw [thick] (0,0,2)--(0,1,1);
\draw [dashed, thick](0,0,0)--(0,1,0);
\node [above] at (0.5,1,0) {$P_1$};
\node [below] at (1,0,2) {$P_0$};
\end{tikzpicture}
\caption{Figure for the proof of Proposition \ref{lowdim}}
\end{figure}
\end{proof}

\section{Acknowledgements}
I would like to thank my advisor Sandra Di Rocco for introducing me to the problem and for guidance along the way. I am also in debt to Christian Haase, Benjamin Nill and Erik Aas for valuable discussions and input on the problem. Finally I would like to thank the Department of Mathematics at KTH in Stockholm and Vetenskapsrådet in Sweden, for their financial support.
\bibliographystyle{ieee}
\bibliography{seshadriarticle}
\end{document}